\newtheorem{theorem}{Theorem}[section]
\newtheorem{lemma}[theorem]{Lemma}
\newtheorem{definition}[theorem]{Definition}
\theoremstyle{remark}
\newtheorem{remark}[theorem]{Remark}
\newtheorem{example}[theorem]{Example}
\numberwithin{equation}{section}
\newcommand{\R}{\mathbb{R}}
\newcommand{\bN}{\mathbb{N}}
\newcommand{\E}{\mathcal{E}}
\newcommand{\QP}{QPSH}
\newcommand{\n}{\noindent}
\newcommand{\norm}[1]{\|{#1}\|}
\begin{document}
	\author{Thai Duong Do\textit{$^{1}$}, Van Thien Nguyen\textit{$^{2}$}}
	\address{\textit{$^{1}$}Department of Mathematics, National University of Singapore - 10, Lower Kent Ridge Road - Singapore 119076\footnote[3]{On leave from Institute of Mathematics, Vietnam Academy of Science and Technology}}
	\email{duongdothai.vn@gmail.com}
	\address{\textit{$^{2}$}FPT University, Education zone, Hoa Lac high tech park, Km29 Thang Long highway, Thach That ward, Hanoi, Viet Nam}
	\email{thiennv15@fe.edu.vn}
	
	\title{On the finite energy classes of quaternionic  plurisubharmonic functions}
	
	\subjclass[2000]{31C10, 32U15, 32U40}
	\keywords{quaternionic  plurisubharmonic functions, quaternionic Monge-Amp\`ere operator, Cegrell's finite energy classes}
	\date{\today}
	\maketitle
	\begin{abstract}
		We investigate the finite $p$-energy classes $\E_p$ of quaternionic  plurisubharmonic functions of Cegrell type. We also construct an example to show that the optimal constant in the energy estimate is strictly bigger than $1$ for $p>0$, $p\neq 1$. This leads us to the fact that we can not use the variational method to solve the quaternionic Monge–Amp\`ere equation for the classes $\E_p$ when $p>0$, $p\neq 1$.
	\end{abstract} 
	\tableofcontents
	
	\section{Introduction}
	The notion of quaternionic  plurisubharmonic (abbreviated qpsh) function of quaternionic variables was introduced by S. Alasker in \cite{alesker03a}. Let $\Omega$ be a domain in the flat space $\mathbb{H}^n$. An upper semi-continuous function $u: \Omega \to \mathbb{R}$ is qpsh if $u$ restricted to any affine right quaternionic line intersected with $\Omega$ is subharmonic. For a $C^2$ qpsh function $u=u(q_1, \cdots, q_n)\colon \mathbb{H}^n \to \mathbb{R}$, the quaternionic  Monge-Amp\`ere operators can be defined as the Moore determinant of the quaternionic Hessian of $u$:
	\begin{align*}
		\mbox{det}\begin{bmatrix}
			\frac{\partial^2 u}{\partial q_j\partial \bar{q}_k}
		\end{bmatrix}.
	\end{align*}
	
	Inspirit of \cite{bt76}, Alesker showed that the quaternionic Monge–Ampère operator can be extended to continuous qpsh functions (see in \cite{alesker03b}). Moreover, he proved one of the most powerful tool in the pluripotential theory, the comparison principle version for qpsh functions. In quaternionic manifolds setting, Alesker in \cite{alesker12} introduced an operator in terms of the Baston operator $\Delta$. As the manifold is flat, he showed that the $n$th-power of this operator coincides the quaternionic Monge–Amp\`ere operator. Motivated by Alesker, Wan and Kang in \cite{WK17} introduced two first-order differential operators $d_0$ and $d_1$. The actings of $d_0, d_1$ and $\Delta=d_0d_1$ on differential forms are very similar $\partial, \bar{\partial}$ and $\Delta=\partial\bar{\partial}$ in complex case. Then we can express the quaternionic Monge–Amp\`ere operator as $(\Delta u)^n$. Following Bedford and Taylor in \cite{bt82}, Wan and Kang showed that the product of currents $\Delta u_1 \wedge \cdots \Delta u_k$ associated to the
	locally bounded qpsh functions $u_1, \dots, u_k$ is a closed current. 
	
	The Dirichlet problem for the quaternionic Monge-Amp\`ere equation is written under the general form:
	\begin{align*}
		\begin{cases}
			u\in QPSH(\Omega)\cap C^0(\bar{\Omega})\\
			\left(\Delta u\right)^n = fdV\\
			u\mid_{\partial \Omega}=\phi.
		\end{cases}
	\end{align*}
	Alesker \cite{alesker03b} solved the Dirichlet problem with a continuous boundary data $\phi$ and the right-hand side $f$ continuous up to the boundary on the unit ball. Following the approach of 
	Ko\l odziej in \cite{ko96,ko98}, Sroka \cite{sroka20a} proved the existence of the weak solutions to the Dirichlet problem with the density $f\in L^p(\Omega)$ for $p>2$ and a continuous boundary data $\phi$. Moreover, the exponent $p$ is optimal.
	
	Very recently, the pluripotential theory associated to qpsh functions has been developed by Alesker, Wan, Sroka,...(see \cite{alesker12, sroka20a, sroka20b, W17, W19, W20, W20b, WK17, WW17, WZ15}). A natural question that arises is to find the domain for the quaternionic Monge-Amp\`ere operators.
	Inspired by Cegrell \cite{cegrell04}, Wan introduced Cegrell classes for qpsh functions. She showed that the quaternionic Monge-Amp\`ere operator is well defined on a certain subset $\mathcal{E}(\Omega)$ of non-positive qpsh functions (see \cite{W20}). Following the variational method in \cite{ACC12, bbgz13}, Wan showed that the solution of the Dirichlet problem on the Cegrell finite energy class $\mathcal{E}_1(\Omega)$ minimizing an energy functional.
	
	In the present paper, we investigate the Cegrell's finite $p$-energy classes $\mathcal{E}_p(\Omega)$ without restriction on $p$. In plurisubharmonic cases, they turn out to be a very effective tool in K\"ahler geometry. For these applications, we refer the readers to the papers \cite{begz10, darvas15, darvas17, dgl21, dinew09, gt23}. The finite $p$-energy classes originally were investigated on $\mathbb{C}^n$ in \cite{ACH07, cegrell04} and on K\"ahler manifolds in \cite{gz07}. We shall rely on the techniques presented in these papers to develop  the finite $p$-energy classes on $\mathbb{H}^n$. In details, we shall prove a sequence of theorems for this class such as the comparison principles, the convergence theorems, especially the $p$-energy estimate inequality:
	\vspace{2mm}
	
	\noindent \textbf{Theorem \ref{thm:energyine}.}
	\textit{Let $u_0,...,u_n\in\E_p(\Omega)$. Then
		$$e_p(u_0,...,u_n)\leq D_p e_p(u_0)^{\frac{p}{n+p}}e_p(u_1)^{\frac{1}{n+p}}...e_p(u_n)^{\frac{1}{n+p}},$$
		where $$D_p=\begin{cases}
			p^{\frac{-\alpha(p,n)}{1-p}},& \text{ if }0<p<1,\\
			1,& \text{ if }p=1,\\
			p^{\frac{p\alpha(p,n)}{p-1}},& \text{ if }p>1,
		\end{cases}$$
		and $$\alpha(p,n)=(p+2)\Big( \frac{p+1}{p}\Big)^{n-1}-(p+1).$$}
	\vspace{2mm}\\
	A natural question that arises is to find the optimal constant $C_p$ such that
	\begin{equation}\label{optimal}
		e_p(u_0,...,u_n)\leq C_p e_p(u_0)^{\frac{p}{n+p}}e_p(u_1)^{\frac{1}{n+p}}...e_p(u_n)^{\frac{1}{n+p}},\ \forall u_0,...,u_n\in\E_p(\Omega).
	\end{equation}
	When $p=1$, the optimal constant $C_1$ on the right-hand side in inequality~(\ref{optimal}) is $1$, according to Theorem \ref{thm:energyine}. However, when $p\neq 1$, it remains unknown to us.
	We would like to emphasize that the optimal constant on the right-hand side in inequality~(\ref{optimal}) plays an important role in using the variational approach to solve the quaternionic Monge–Amp\`ere equation when the right-hand side is a positive measure of finite energy (see \cite[Section 4]{W20}). To be more specific, if this constant exceeds $1$, the application of the variational approach to address the quaternionic Monge–Amp\`ere equation is not possible. Regarding this topic, in plurisubharmonic case, \AA hag and Czy\.z have constructed a counterexample based on using Beta function, Gamma function and Digamma function to confirm that this optimal constant must exceed $1$ in the classes $\mathcal{E}_p$ when $p\neq 1$ (see \cite{AC09}). A similar counterexample has also been constructed by Czy\.z and Nguyen in $m$-subharmonic setting (see \cite{CN17}).
	Motivated by their counterexamples, in Section \ref{sec4}, we also construct a counterexample which leads us to the fact that the optimal constant must also exceed $1$ in the classes $\mathcal{E}_p$, $p\neq 1$ in qpsh setting. Due to the complexities in quaternionic analysis, particularly in Moore determinant calculations, the computations in our counterexample differ from those constructed in plurisubharmonic and $m$-subharmonic cases.
	\section{Preliminaries}
	Denote by $QPSH(\Omega)$ the class of qpsh function  on $\Omega$ and by
	$QPSH^{-}(\Omega)$ the subclass of negative qpsh functions. Throughout the paper, $\Omega$ will always be a quaternionic hyperconvex domain of $\mathbb{H}^n$, which means that it is a bounded domain and that there exists a negative qpsh function $\rho$ such that the set $\{q \in \Omega \colon \rho(q) < c\}$ is a relatively compact subset of $\Omega$, for
	any $c<0$. Such function $\rho$ is called an exhaustion function defining $\Omega$. Let $p>0$.
	\begin{definition}
		The quaternionic Cegrell classes are defined as follows    \begin{align*}
			\mathcal{E}_{0}(\Omega)&=\left\{u\in QPSH^-(\Omega)\cap L^\infty(\Omega) \colon \lim_{\Omega\ni z\to\xi_0}u(z)=0 \ \forall \xi_o\in\partial\Omega \ \mbox{and} \  \int\limits_\Omega (\Delta u)^n < \infty\right\},\\
			\mathcal{E}_p(\Omega) &=\left\{u\in QPSH^-(\Omega) \colon \exists \ \{u^j\}\subset \mathcal{E}_0(\Omega),\  u^j\searrow u \text{ and } \sup_j\int\limits_\Omega (-u^j)^p(\Delta u^j)^n<\infty\right\},\\
			\mathcal{F}(\Omega)&=\left\{ u\in QPSH^-(\Omega) \colon \exists \ \{u^j\}\subset \mathcal{E}_0(\Omega),\  u^j\searrow u \text{ and } \sup_j \int\limits_\Omega (\Delta u^j)^n < \infty \right\},\\
			\mathcal{E}(\Omega)&=\left\{ u\in QPSH^-(\Omega) \colon \text{ for every } K\Subset \Omega,\ \exists \ u_K\in \mathcal{F}(\Omega) \text{ such that } u_K=u \text{ on } K  \right\}.
		\end{align*}
	\end{definition}
	The quaternionic Monge–Amp\`ere operator is well-defined for functions in the class $\E(\Omega)$ as follows.
	\begin{theorem}\label{converges e}(\cite[Theorem  3.2]{W20})
		Let $u_1,...,u_n\in\E(\Omega)$ and $(u_1^j),...,(u_n^j)\subset\E_0(\Omega)$ such that $u_k^j\searrow u_k$ for every $k=1,...,n$. Then the sequence of measures $\triangle u_1^j\wedge...\wedge\triangle u_n^j$ converges weakly to a positive Radon measure which does not depend on the choice of the sequences $(u_k^j)$. We then define $\triangle u_1\wedge...\wedge\triangle u_n$ to be this weak limit.
	\end{theorem}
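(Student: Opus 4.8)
The plan is to follow Cegrell's scheme \cite{cegrell04}, in the quaternionic form carried out by Wan \cite{W20}: prove the statement first for locally bounded qpsh functions, then bootstrap to $\F(\Om)$, and finally localise to $\E(\Om)$. For the base case I would use the quaternionic Bedford--Taylor theory of Wan and Kang \cite{WK17}: if $v_k^j\searrow v_k$ are \emph{bounded} qpsh functions, then the mixed currents $\Delta v_1^j\wedge\cdots\wedge\Delta v_n^j$ are positive and closed and converge weakly to $\Delta v_1\wedge\cdots\wedge\Delta v_n$, independently of the chosen sequences. Throughout I would rely on two further tools available from \cite{alesker03b, WK17, W20}: the quaternionic comparison principle and integration by parts for $\Delta$ (equivalently, for $d_0,d_1$).

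First I would record the basic estimates on $\E_0(\Om)$. By iterating the Cauchy--Schwarz inequality together with integration by parts (the quaternionic analogue of Cegrell's inequalities) one obtains, for $w_1,\dots,w_n\in\E_0(\Om)$,
$$\int_\Om\Delta w_1\wedge\cdots\wedge\Delta w_n\ \le\ \Big(\int_\Om(\Delta w_1)^n\Big)^{1/n}\cdots\Big(\int_\Om(\Delta w_n)^n\Big)^{1/n}<\infty .$$
Hence if $u_k\in\F(\Om)$ and $u_k^j\in\E_0(\Om)$ with $u_k^j\searrow u_k$ and $\sup_j\int_\Om(\Delta u_k^j)^n<\infty$, the measures $\mu_j:=\Delta u_1^j\wedge\cdots\wedge\Delta u_n^j$ have uniformly bounded total mass, so $(\mu_j)$ is relatively compact for the weak-$\ast$ topology, and it remains to pin down a unique limit.

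The core of the argument is to upgrade precompactness to convergence with a limit independent of all the chosen sequences; I would do this by induction on the number of entries lying in $\F(\Om)\setminus\E_0(\Om)$. When exactly one entry is unbounded, say $u_1\in\F(\Om)$ and $u_2,\dots,u_n\in\E_0(\Om)$, and $u_1^j\searrow u_1$, $\tilde u_1^j\searrow u_1$ are two defining sequences, I would pass to $\max(u_1^j,\tilde u_1^j)\in\E_0(\Om)$, which also decreases to $u_1$; a double-index limit using $\max(u_1^k,\tilde u_1^j)$ with $k\ge j$, which lies between $\tilde u_1^j$ and $0$ and is therefore bounded, reduces everything to the bounded case and identifies the two weak limits. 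For the inductive step (several unbounded entries) the ``rest'' current $T_j=\Delta u_2^j\wedge\cdots\wedge\Delta u_n^j$ is no longer bounded, and one must instead integrate by parts to move a $\Delta$ onto a test function $\chi\in C_0^\infty(\Om)$, estimate the resulting difference by $\int_\Om(u_1^j-\tilde u_1^j)\,\Delta\chi\wedge(\cdots)$, and make it vanish by a quasi-continuity / capacity argument together with the convergence of total masses coming from the inequality above. This inductive step is where I expect the main difficulty to lie, and it is the place where the quaternionic features enter most seriously: one needs the integration-by-parts formula and the positivity and closedness of $\Delta v_1\wedge\cdots\wedge\Delta v_k$ for bounded $v_i$, whose proofs go through the Moore determinant rather than an ordinary determinant, but these are precisely the facts supplied by \cite{WK17, W20}.

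Finally, to go from $\F(\Om)$ to $\E(\Om)$ I would localise. Given $u_k\in\E(\Om)$ and $G\Subset\Om$, pick $u_{k,G}\in\F(\Om)$ with $u_{k,G}=u_k$ on a neighbourhood of $\overline{G}$ and set $(\Delta u_1\wedge\cdots\wedge\Delta u_n)\big|_G:=(\Delta u_{1,G}\wedge\cdots\wedge\Delta u_{n,G})\big|_G$. Because the Monge--Amp\`ere operator is local for bounded functions and, by the previous step, this locality persists in the limit, the definition does not depend on the choice of the $u_{k,G}$, so the locally defined measures glue to a global positive Radon measure on $\Om$; applying the $\F(\Om)$ convergence on each $G$ then yields the asserted weak convergence and independence of the sequences on $\E(\Om)$.
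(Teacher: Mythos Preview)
Your overall architecture --- bounded qpsh via Bedford--Taylor, then $\F(\Omega)$, then localise to $\E(\Omega)$ --- is Cegrell's scheme and matches the paper; the localisation at the end is essentially the same. The divergence is in how the $\F(\Omega)$ step is executed. You argue via precompactness from the mass bound plus an induction on the number of unbounded entries, with the inductive step (which you flag as the main difficulty) relying on integration by parts against a smooth $\chi$ and a quasi-continuity/capacity estimate. The paper bypasses this entirely by testing against $h\in\E_0(\Omega)$ rather than $\chi\in C_0^\infty(\Omega)$. Integration by parts then shows that $j\mapsto\int_\Omega h\,\Delta u_1^j\wedge\cdots\wedge\Delta u_n^j$ is \emph{monotone decreasing} and bounded below, so these integrals converge directly --- no induction, no capacity estimates --- and a lemma of Wan converts this into weak convergence of the measures. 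Independence of the sequences is then obtained by a short symmetric argument: iterate integration by parts, swapping one slot at a time, to get $\lim_j\int_\Omega h\,\Delta v_1^j\wedge\cdots\wedge\Delta v_n^j\ge\lim_j\int_\Omega h\,\Delta u_1^j\wedge\cdots\wedge\Delta u_n^j$, and then reverse roles.

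So your plan is not wrong in spirit, but the step you correctly identify as the hard one is in fact avoidable, and as written it is underspecified: for a smooth test function $\chi$, the form $\Delta\chi$ is not positive, so the difference $\int_\Omega(u_1^j-\tilde u_1^j)\,\Delta\chi\wedge(\cdots)$ carries no obvious sign, and the ``quasi-continuity/capacity'' mechanism that would close the gap is left vague. The paper's monotonicity trick with $h\in\E_0(\Omega)$ sidesteps all of this and is what buys the simplicity.
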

	The proof is based on Cegrell's ideas in \cite{cegrell04}. For the convenience of the reader, we present the proof here.
	\begin{proof}
		We first assume that $\max\limits_{1\leq k\leq n}\sup\limits_j\int\limits_\Omega (\triangle u_k^j)^n<\infty$. Let $h\in\E_0(\Omega)$, we observe that
		$\int\limits_\Omega h\triangle u_1^j\wedge...\wedge \triangle u_n^j$ is decreasing by integration by parts (see \cite[Proposition 3.1]{W20}). We also have
		$$\int\limits_\Omega h\triangle u_1^j\wedge...\wedge \triangle u_n^j\geq (\inf_\Omega h)\max\limits_{1\leq k\leq n}\sup\limits_j\int\limits_\Omega (\triangle u_k^j)^n>-\infty,$$
		by \cite[Corollary 3.1]{W20}. Hence, $\lim\limits_{j\rightarrow\infty}\int\limits_\Omega h\triangle u_1^j\wedge...\wedge \triangle u_n^j$ exists for every $h\in\E_0(\Omega)$. Therefore, by \cite[Lemma 3.2]{W20}, the sequence of measures $\triangle u_1^j\wedge...\wedge\triangle u_n^j$ converges weakly.
		
		Suppose that $(v_1^j),...,(v_n^j)\subset\E_0(\Omega)$ such that $v_k^j\searrow u_k$ for every $k=1,...,n$. By the previous argument, we have $\lim\limits_{j\rightarrow\infty}\int\limits_\Omega h\triangle v_1^j\wedge...\wedge \triangle v_n^j$ exists for every $h\in\E_0(\Omega)$. By integration by parts, we observe that, for every $h\in\E_0(\Omega)$,
		\begin{align*}
			&\int\limits_\Omega h\triangle v_1^j\wedge...\wedge \triangle v_n^j=\int\limits_\Omega v_1^j\triangle h\wedge\triangle v_2^j\wedge...\wedge \triangle v_n^j\\
			\geq &\int\limits_\Omega u_1\triangle h\wedge\triangle v_2^j\wedge...\wedge \triangle v_n^j
			=\lim\limits_{s_1\rightarrow\infty}\int\limits_\Omega u_1^{s_1}\triangle h\wedge\triangle v_2^j\wedge...\wedge \triangle v_n^j\\
			=&\lim\limits_{s_1\rightarrow\infty}\int\limits_\Omega v_2^j\triangle h\wedge\triangle u_1^{s_1}\wedge...\wedge \triangle v_n^j\geq...\\
			\geq&\lim\limits_{s_1\rightarrow\infty}\lim\limits_{s_2\rightarrow\infty}...\lim\limits_{s_n\rightarrow\infty}\int\limits_\Omega h\triangle u_1^{s_1}\wedge\triangle u_2^{s_2}\wedge...\wedge \triangle u_n^{s_n}\\
			=&\lim\limits_{s\rightarrow\infty}\int\limits_\Omega h\triangle u_1^{s}\wedge\triangle u_2^{s}\wedge...\wedge \triangle u_n^{s}.
		\end{align*}
		Therefore, $$\lim\limits_{j\rightarrow\infty}\int\limits_\Omega h\triangle v_1^j\wedge...\wedge \triangle v_n^j\geq\lim\limits_{j\rightarrow\infty}\int\limits_\Omega h\triangle u_1^j\wedge...\wedge \triangle u_n^j,$$ for every $h\in\E_0(\Omega)$.
		But this is a symmetric situation, we conclude that
		$$\lim\limits_{j\rightarrow\infty}\int\limits_\Omega h\triangle v_1^j\wedge...\wedge \triangle v_n^j=\lim\limits_{j\rightarrow\infty}\int\limits_\Omega h\triangle u_1^j\wedge...\wedge \triangle u_n^j,$$ for every $h\in\E_0(\Omega)$.
		
		Now we remove the restriction $$\max\limits_{1\leq k\leq n}\sup\limits_j\int\limits_\Omega (\triangle u_k^j)^n<\infty.$$ Let $K$ be a compact subset of $\Omega$. We cover $K$ by $U_1$,...,$U_N$. By the definition of $\E(\Omega)$, we choose $(h_{kl}^j)\subset\E_0(\Omega)$ such that $h_{kl}^j\searrow u_k$ on $U_l$ for $k=1,...,n$ and $l=1,...,N$. We set $w_k^j=\sum\limits_{l=1}^N h_{kl}^j.$ Then $w_k^j\in\E_0(\Omega)$ and $\sup\limits_j\int\limits_\Omega (\triangle w_k^j)^n<\infty$. By rearranging $h_{kl}^j$, we can assume that $w_k^j\leq u_k^j$ on $\cup_{l=1}^N U_l$.
		Now we set $f_k^j=\max(w_k^j, u_k^j)$. It follows that $f_k^j\in\E_0(\Omega)$, $f_k^j=u_k^j$ near $K$ and $\sup\limits_j\int\limits_\Omega(\triangle f_k^j)^n<\infty.$ This completes the proof.
	\end{proof}
	\section{The finite energy classes}\label{sec3}
	\subsection{H\"older type inequalities for $p$-energy estimates of the class $\E_0$}
	First, we recall the definitions of the quaternionic $p$-energy and the mutual quaternionic $p$-energy.
	\begin{definition}\cite{W20}
		Let $u,u_0,u_1,...,u_n\in\E_0(\Omega)$ and $p>0$. We define the quaternionic $p$-energy of $u$ and the mutual quaternionic $p$-energy of $u_0,...,u_n$ to be, respectively,
		$$e_p(u):=\int\limits_\Omega(-u)^p(\triangle u)^n,$$
		$$e_p(u_0,...,u_n):=\int\limits_\Omega(-u_0)^p\triangle u_1\wedge\cdots\wedge \triangle u_n .$$
		and the mutual quaternionic $p$-energy.
	\end{definition}
	The following result provides us an inequality between the mutual quaternionic $p$-energies when $0<p<1$.
	\begin{theorem}\label{energy estimate}
		Let $u_0,...,u_n\in\E_0(\Omega)$. Then, for $0<p<1$, we have
		$$e_p(u_0,u_1,...,u_n)\leq p^{\frac{-1}{1-p}}e_p(u_0,u_0,u_2,...,u_n)^{\frac{p}{p+1}}e_p(u_1,u_1,u_2,...,u_n)^{\frac{1}{p+1}}.$$
	\end{theorem}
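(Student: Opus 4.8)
My plan is to reduce the estimate to the Cauchy--Schwarz inequality for the mutual $1$-energy by means of the substitution $w_i:=-(-u_i)^{(p+1)/2}$. Write $T:=\Delta u_2\wedge\cdots\wedge\Delta u_n$ and $r:=\frac{2}{p+1}\in(1,2)$; since $t\mapsto-(-t)^{(p+1)/2}$ is convex and increasing, each $w_i$ ($i=0,1$) is a negative bounded qpsh function vanishing on $\partial\Omega$, and $u_i=-(-w_i)^{r}$. First I would record, by integration by parts (\cite[Proposition~3.1]{W20}, using $d_0T=d_1T=0$ and $d_0((-u)^q)=-q(-u)^{q-1}d_0u$), the scalar identity $\int_\Omega(-u)^{q-1}d_0u\wedge d_1u\wedge T=\frac1q\int_\Omega(-u)^{q}\Delta u\wedge T$ for $q>0$, applied to $u=u_0,u_1\in\mathcal{E}_0(\Omega)$. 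Combined with $d_0w_i=\frac{p+1}{2}(-u_i)^{(p-1)/2}d_0u_i$, it yields the explicit values
\[
\int_\Omega(-w_i)\,\Delta w_i\wedge T=\int_\Omega d_0w_i\wedge d_1w_i\wedge T=\Big(\frac{p+1}{2}\Big)^{2}\int_\Omega(-u_i)^{p-1}d_0u_i\wedge d_1u_i\wedge T=\frac{(p+1)^{2}}{4p}\,e_p(u_i,u_i,u_2,\dots,u_n).
\]

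Next I would estimate $e_p(u_0,u_1,\dots,u_n)$. The chain rule applied to $u_1=-(-w_1)^{r}$ gives, on $\Omega$,
\[
\Delta u_1=r(-w_1)^{r-1}\Delta w_1-r(r-1)(-w_1)^{r-2}d_0w_1\wedge d_1w_1\ \le\ r(-w_1)^{r-1}\Delta w_1
\]
as currents (the subtracted term is a nonnegative measure since $r>1$). Hence, using $(-u_0)^{p}=(-w_0)^{pr}$ and then the pointwise weighted AM--GM inequality $(-w_0)^{pr}(-w_1)^{r-1}\le pr(-w_0)+(r-1)(-w_1)$ (valid since $pr+(r-1)=1$ with $pr,r-1\in(0,1)$),
\[
e_p(u_0,u_1,\dots,u_n)=\int_\Omega(-w_0)^{pr}\Delta u_1\wedge T\ \le\ pr^{2}\int_\Omega(-w_0)\Delta w_1\wedge T+r(r-1)\int_\Omega(-w_1)\Delta w_1\wedge T.
\]
For the first integral I would invoke the Cauchy--Schwarz inequality for the mutual $1$-energy, $\int_\Omega(-w_0)\Delta w_1\wedge T\le\big(\int_\Omega(-w_0)\Delta w_0\wedge T\big)^{1/2}\big(\int_\Omega(-w_1)\Delta w_1\wedge T\big)^{1/2}$, which follows from integration by parts and the Cauchy--Schwarz inequality for the forms $d_0v\wedge d_1v\wedge T$. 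Inserting the values of the $1$-energies from the first paragraph and using $pr^{2}\cdot\frac{(p+1)^2}{4p}=1$ and $r(r-1)\cdot\frac{(p+1)^2}{4p}=\frac{1-p}{2p}$, one obtains, with $A:=e_p(u_0,u_0,u_2,\dots,u_n)$ and $B:=e_p(u_1,u_1,u_2,\dots,u_n)$,
\[
e_p(u_0,u_1,\dots,u_n)\ \le\ A^{1/2}B^{1/2}+\frac{1-p}{2p}\,B.
\]

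Since this holds for every pair in $\mathcal{E}_0(\Omega)$, I would finish by scaling: replacing $u_1$ by $tu_1$ ($t>0$) multiplies the left side by $t$ and $B$ by $t^{p+1}$, so after dividing by $t$ one gets $e_p(u_0,u_1,\dots,u_n)\le t^{(p-1)/2}A^{1/2}B^{1/2}+\frac{1-p}{2p}t^{p}B$ for all $t>0$; minimizing over $t$ (the optimum is $t=(A/B)^{1/(p+1)}$) gives $e_p(u_0,u_1,\dots,u_n)\le\frac{p+1}{2p}A^{\frac{p}{p+1}}B^{\frac{1}{p+1}}$. Since $\frac{p+1}{2p}\le p^{-1/(1-p)}$ for every $p\in(0,1)$ (an elementary inequality), this is in fact slightly stronger than, and so implies, the asserted bound.

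The main difficulty I expect is not computational but a regularity point: for $0<p<1$ the functions $w_i=-(-u_i)^{(p+1)/2}$ need not have finite Monge--Amp\`ere mass, hence need not lie in $\mathcal{E}_0(\Omega)$, so the integration by parts of \cite[Proposition~3.1]{W20} and the $1$-energy Cauchy--Schwarz must be extended to this slightly larger class, and the chain-rule identity for $\Delta u_1$ must be justified up to $\partial\Omega$. I would handle this by approximating each $u_i$ from above by bounded (or smooth) functions in $\mathcal{E}_0(\Omega)$, composing with $t\mapsto-(-t)^{(p+1)/2}$, and passing to the limit: all quantities appearing above are \emph{mixed} with the fixed current $T$, hence finite and convergent along the approximation by Theorem~\ref{converges e} and monotone convergence; and the point that legitimizes the chain-rule step is precisely that after the AM--GM bound the surviving weight $pr(-w_0)+(r-1)(-w_1)$ vanishes on $\partial\Omega$, which controls the possibly boundary-singular positive measure $(-w_1)^{r-1}\Delta w_1\wedge T$. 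The finitely many borderline values of $p$ at which some intermediate exponent degenerates require only routine separate bookkeeping, as does the trivial case $AB=0$.
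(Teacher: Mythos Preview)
Your approach is correct and genuinely different from the paper's. The paper sets $w=-(-u_1)^p$, expands $\Delta u_1$ by the chain rule, drops the positive $\gamma(w,w)$--term to obtain $\int(-u_0)^p\Delta u_1\wedge T\le\frac1p\int(-u_0)^p(-u_1)^{1-p}\Delta w\wedge T$, applies H\"older with exponents $1/p,\,1/(1-p)$ to the product $(-u_0)^p(-u_1)^{1-p}$, integrates by parts to reach the mixed energies $\int(-u_1)^p\Delta u_0\wedge T$ and $\int(-u_1)^p\Delta u_1\wedge T$, and finally iterates the resulting estimate once (with the roles of $u_0,u_1$ swapped) to close. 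Your route instead chooses the exponent $(p+1)/2$ symmetrically for both $u_0$ and $u_1$, which reduces the question to the Cauchy--Schwarz inequality for the mutual $1$-energy of $w_0,w_1$; the AM--GM step followed by the scaling optimisation then replaces the paper's H\"older-plus-iteration. What you gain is a strictly sharper constant $\tfrac{p+1}{2p}<p^{-1/(1-p)}$ for every $p\in(0,1)$ (your elementary inequality is indeed true: the function $H(p)=(1-p)\log\tfrac{2p}{p+1}-\log p$ is strictly concave on $(0,1)$ with $H(0^+)=\log2$, $H(1)=0$, hence $H>0$), at the price of taking the $p=1$ case as input rather than proving everything from scratch.

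The regularity caveat you raise---that $w_i=-(-u_i)^{(p+1)/2}$ need not lie in $\mathcal{E}_0(\Omega)$, so the integration by parts and the $1$-energy Cauchy--Schwarz must be justified for bounded qpsh functions with zero boundary values rather than for $\mathcal{E}_0$---is real, and your approximation scheme handles it. It is worth noting that the paper's own proof has an exactly parallel implicit step: its chain-rule identity contains the factor $(-w)^{1/p-2}$, which is unbounded near $\partial\Omega$ whenever $p>\tfrac12$, and the paper does not comment on this. Both gaps are closed by the same device (work first with smooth or suitably truncated approximants, for which all intermediate integrals are finite, prove the final inequality there, and pass to the limit), so your proof is at the same level of rigour as the paper's.
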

	\begin{proof}
		We set $T=\triangle u_2\wedge\cdots\wedge\triangle u_n$ and $w=-(-u_1)^p$. It follows that $T$ is a $(2n-2)$-positive closed current, $w\in\QP(\Omega)\cap L^\infty(\Omega)$ and $\lim\limits_{\Omega\ni q\rightarrow\xi}w(q)=0$ for every $\xi\in\partial\Omega$. We observe that
		\begin{align*}
			&\int\limits_\Omega (-u_0)^p\triangle u_1\wedge T=\frac{1}{2}\int\limits_\Omega (-u_0)^p(d_0d_1-d_1d_0) (-w)^\frac{1}{p}\wedge T\\
			&=\frac{1}{2}\Big[\frac{1}{p}\int\limits_\Omega (-u_0)^p(-w)^{\frac{1}{p}-1}d_0d_1 w\wedge T -\frac{1}{p}\int\limits_\Omega (-u_0)^p(-w)^{\frac{1}{p}-1}d_1d_0 w\wedge T\Big]\\
			&-\frac{1}{2}\Big[ \frac{1-p}{p^2}\int\limits_\Omega (-u_0)^p(-w)^{\frac{1}{p}-2}d_0w\wedge d_1w\wedge T-\frac{1-p}{p^2}\int\limits_\Omega (-u_0)^p(-w)^{\frac{1}{p}-2}d_1w\wedge d_0w\wedge T\Big]\\
			&=\frac{1}{p}\int\limits_\Omega (-u_0)^p(-w)^{\frac{1}{p}-1}\triangle w\wedge T
			- \frac{1-p}{p^2}\int\limits_\Omega (-u_0)^p(-w)^{\frac{1}{p}-2}\gamma(w,w)\wedge T,
		\end{align*}
		where $\gamma(w,w):=\frac{1}{2}\left(d_0w\wedge d_1w-d_1w\wedge d_0w\right)$.
		Then, since $\gamma(w,w)\wedge \triangle u_2\wedge\cdots\wedge\triangle u_n$ is a positive $(2n)$-current (see \cite[Proposition 3.1]{W17}), we obtain
		\begin{equation*}
			\int\limits_\Omega (-u_0)^p\triangle u_1\wedge T\leq \frac{1}{p}\int\limits_\Omega (-u_0)^p(-u_1)^{1-p}\triangle w\wedge T.
		\end{equation*}
		Therefore, by H\"older inequality and integration by parts, it follows that
		\begin{align*}
			\int\limits_\Omega (-u_0)^p\triangle u_1\wedge T&\leq \Big[ \int\limits_\Omega (-u_0)\triangle w\wedge T\Big]^p\Big[\int\limits_\Omega (-u_1)\triangle w\wedge T \Big]^{1-p}\\
			&=\Big[ \int\limits_\Omega (-w)\triangle u_0\wedge T\Big]^p\Big[\int\limits_\Omega (-w)\triangle u_1\wedge T \Big]^{1-p},
		\end{align*}
		and hence
		\begin{equation}\label{ineq1}
			\int\limits_\Omega (-u_0)^p\triangle u_1\wedge T\leq \Big[ \int\limits_\Omega (-u_1)^p\triangle u_0\wedge T\Big]^p\Big[\int\limits_\Omega (-u_1)^p\triangle u_1\wedge T \Big]^{1-p}.
		\end{equation}
		Similarly, we have
		\begin{equation}\label{ineq2}
			\int\limits_\Omega (-u_1)^p\triangle u_0\wedge T\leq \Big[ \int\limits_\Omega (-u_0)^p\triangle u_1\wedge T\Big]^p\Big[\int\limits_\Omega (-u_0)^p\triangle u_0\wedge T \Big]^{1-p}.
		\end{equation}
		Combining inequalities \ref{ineq1} and \ref{ineq2}, we obtain
		\begin{align*}
			\int\limits_\Omega (-u_0)^p\triangle u_1\wedge T \leq \frac{1}{p^{1+p}}&\Big[\int\limits_\Omega (-u_0)^p\triangle u_1\wedge T \Big]^{p^2}\Big[ \int\limits_\Omega (-u_0)^p\triangle u_0\wedge T\Big]^{p(1-p)}\\ 
			&\Big[ \int\limits_\Omega (-u_1)^p\triangle u_1\wedge T\Big]^{1-p}.
		\end{align*}
		Therefore,
		$$ \int\limits_\Omega (-u_0)^p\triangle u_1\wedge T \leq p^{-\frac{1}{1-p}}\Big[  \int\limits_\Omega (-u_0)^p\triangle u_0\wedge T \leq\Big]^{\frac{p}{p+1}}\Big[  \int\limits_\Omega (-u_1)^p\triangle u_1\wedge T \leq\Big]^{\frac{1}{p+1}},$$
		as desired.
	\end{proof}
	By Theorem~\ref{energy estimate} and \cite[Theorem 4.1]{P99}, we have the following H\"older type inequality for $p$-energy estimates of the class $\E_0$ when $0<p<1$.
	\begin{theorem}\label{energy estimate e0 minor}
		Let $u_0,...,u_n\in\E_0(\Omega)$ and $0<p<1$. Then
		$$e_p(u_0,...,u_n)\leq D_p e_p(u_0)^{\frac{p}{n+p}}e_p(u_1)^{\frac{1}{n+p}}...e_p(u_n)^{\frac{1}{n+p}},$$
		where $$D_p=
		p^{\frac{-\alpha(p,n)}{1-p}} \text{ and }
		\alpha(p,n)=(p+2)\Big( \frac{p+1}{p}\Big)^{n-1}-(p+1).$$
	\end{theorem}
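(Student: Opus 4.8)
The plan is to promote the two--variable estimate of Theorem~\ref{energy estimate} to the full $(n+1)$--variable estimate by the combinatorial iteration of \cite[Theorem~4.1]{P99}, as in the plurisubharmonic and $m$--subharmonic settings. Write $c:=p^{-1/(1-p)}$ (so $c>1$ since $0<p<1$) and regard
$$(v_0,v_1,\dots,v_n)\ \longmapsto\ e_p(v_0,v_1,\dots,v_n)=\int_\Omega(-v_0)^p\,\triangle v_1\wedge\cdots\wedge\triangle v_n$$
as a function on $\E_0(\Omega)^{n+1}$. It is non-negative (the current $\triangle v_1\wedge\cdots\wedge\triangle v_n$ is positive and $(-v_0)^p\ge 0$), finite (each $v_j$ is bounded and, for functions in $\E_0(\Omega)$, the mixed Monge--Amp\`ere measure has finite total mass on $\Omega$), and symmetric in the last $n$ entries. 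Furthermore, Theorem~\ref{energy estimate} says precisely that
$$e_p(u_0,u_1,u_2,\dots,u_n)\le c\,e_p(u_0,u_0,u_2,\dots,u_n)^{\frac{p}{p+1}}\,e_p(u_1,u_1,u_2,\dots,u_n)^{\frac1{p+1}}$$
for all $u_0,\dots,u_n\in\E_0(\Omega)$; by the symmetry in the last $n$ slots the same inequality holds with $u_1$ replaced by any $u_j$, $1\le j\le n$. This is exactly the hypothesis of \cite[Theorem~4.1]{P99}, with weights $a=\tfrac{p}{p+1}$ and $b=\tfrac1{p+1}$ satisfying $a+b=1$.

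Applying \cite[Theorem~4.1]{P99} then yields an inequality of the form
$$e_p(u_0,\dots,u_n)\le c^{\,N}\,e_p(u_0)^{\theta_0}\,e_p(u_1)^{\theta_1}\cdots e_p(u_n)^{\theta_n},$$
where the diagonal weights $\theta_j$ are forced: they must satisfy $\theta_0+\cdots+\theta_n=1$ and be compatible with the ratio $a:b=p:1$ in which Theorem~\ref{energy estimate} merges an entry with the distinguished slot, and the only such weights are $\theta_0=\tfrac{p}{n+p}$ and $\theta_1=\cdots=\theta_n=\tfrac1{n+p}$. It remains to pin down the exponent $N=N(p,n)$ of $c$. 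Tracking this exponent through the iteration of \cite[Theorem~4.1]{P99} yields the recursion $N(p,n)=\tfrac{p+1}{p}\big(N(p,n-1)+1\big)$ with $N(p,1)=1$, whose solution is
$$N(p,n)=\alpha(p,n)=(p+2)\Big(\frac{p+1}{p}\Big)^{n-1}-(p+1).$$
Hence $D_p=c^{\alpha(p,n)}=p^{-\alpha(p,n)/(1-p)}$, which is the claim.

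The only delicate point is this last step, the bookkeeping of the constant through the iteration; non-negativity, finiteness, the use of Theorem~\ref{energy estimate} at an arbitrary slot, and the determination of $\theta_0,\dots,\theta_n$ are all routine. If one prefers to avoid quoting \cite[Theorem~4.1]{P99}, the same conclusion can be reached by a direct induction on $n$: apply Theorem~\ref{energy estimate} once to reduce $e_p(u_0,\dots,u_n)$ to mutual $p$--energies having a repeated argument, then reduce each of those to diagonal $p$--energies by applying Theorem~\ref{energy estimate} again and solving the resulting two--term log--linear system; assembling the exponents of $c$ reproduces the recursion above, and hence the same $D_p$.
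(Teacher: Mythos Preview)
Your argument is correct and follows essentially the same route as the paper: verify that $e_p(\cdot,\ldots,\cdot)$ is nonnegative, symmetric in the last $n$ slots, and satisfies the two--variable inequality of Theorem~\ref{energy estimate}, then invoke \cite[Theorem~4.1]{P99} to obtain the stated exponents and constant. You have in fact supplied more detail than the paper does (the explicit recursion for the exponent of $c$ and its solution), but the strategy is identical.
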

	\begin{proof}
		We define \begin{align*}
			F:(\E_0(\Omega))^{n+1}&\rightarrow\R^+\\
			(u_0,...,u_n)&\mapsto e_p(u_0,...,u_n).
		\end{align*} 
		It follows that $F$ is commutative in the last $n$ variables. By Theorem~\ref{energy estimate}, we also have
		$$F(u_0,u_1,...,u_n)\leq p^{\frac{-1}{1-p}}F(u_0,u_0,u_2,...,u_n)^{\frac{p}{p+1}}F(u_1,u_1,u_2,...,u_n)^{\frac{1}{p+1}}.$$
		Therefore, by \cite[Theorem 4.1]{P99}, we obtain
		$$F(u_0,...,u_n)\leq D_p F(u_0,...,u_0)^{\frac{p}{n+p}}F(u_1,...,u_1)^{\frac{1}{n+p}}...F(u_n,...,u_n)^{\frac{1}{n+p}},$$
		and hence
		$$e_p(u_0,...,u_n)\leq D_p e_p(u_0)^{\frac{p}{n+p}}e_p(u_1)^{\frac{1}{n+p}}...e_p(u_n)^{\frac{1}{n+p}},$$
		as desired.
	\end{proof}
	Combine Theorem~\ref{energy estimate e0 minor} and \cite[Theorem 3.4]{W20}, we have the following H\"older type inequality for $p$-energy estimates of the class $\E_0$ when $p>0$.
	\begin{theorem}\label{energy estimate e0}
		Let $u_0,...,u_n\in\E_o(\Omega)$. Then
		$$e_p(u_0,...,u_n)\leq D_p e_p(u_0)^{\frac{p}{n+p}}e_p(u_1)^{\frac{1}{n+p}}...e_p(u_n)^{\frac{1}{n+p}},$$
		where $$D_p=\begin{cases}
			p^{\frac{-\alpha(p,n)}{1-p}},& \text{ if }0<p<1,\\
			1,& \text{ if }p=1,\\
			p^{\frac{p\alpha(p,n)}{p-1}},& \text{ if }p>1,
		\end{cases}$$
		and $$\alpha(p,n)=(p+2)\Big( \frac{p+1}{p}\Big)^{n-1}-(p+1).$$
	\end{theorem}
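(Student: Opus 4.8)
The plan is to separate the three ranges $0<p<1$, $p=1$, and $p>1$. For $0<p<1$ the asserted inequality is exactly Theorem~\ref{energy estimate e0 minor}; for $p=1$ the constant is $D_1=1$ and the estimate is the one in \cite[Theorem 3.4]{W20}. Hence the only genuinely new range is $p>1$, and for it I would follow the same two-step scheme as in the proof of Theorem~\ref{energy estimate e0 minor}: first establish a two-variable H\"older-type ``seed'' inequality, then amplify it by Persson's abstract inequality \cite[Theorem 4.1]{P99}.

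For the seed inequality, fix $u_0,\dots,u_n\in\E_0(\Omega)$ and set $T=\Delta u_2\wedge\cdots\wedge\Delta u_n$, a positive closed $(2n-2)$-current. When $p>1$ the function $-(-u_1)^p$ fails to be qpsh, so the substitution used in Theorem~\ref{energy estimate} is unavailable; instead I would integrate by parts in $e_p(u_0,u_1,u_2,\dots,u_n)=\int_\Omega(-u_0)^p\,\Delta u_1\wedge T$ (as in \cite[Proposition 3.1]{W20}) and use the chain rule
$$\Delta\bigl((-u_0)^p\bigr)=-p(-u_0)^{p-1}\Delta u_0+p(p-1)(-u_0)^{p-2}\gamma(u_0,u_0).$$
Because $u_1\le0$, $p(p-1)>0$, and $\gamma(u_0,u_0)\wedge T$ is a positive $(2n)$-current by \cite[Proposition 3.1]{W17}, the $\gamma$-contribution is nonpositive, so
$$e_p(u_0,u_1,u_2,\dots,u_n)\le p\int_\Omega(-u_1)(-u_0)^{p-1}\,\Delta u_0\wedge T,$$
and H\"older's inequality with the conjugate exponents $p$ and $\frac{p}{p-1}$ applied to the right-hand side yields
$$e_p(u_0,u_1,u_2,\dots,u_n)\le p\,e_p(u_1,u_0,u_2,\dots,u_n)^{\frac1p}\,e_p(u_0,u_0,u_2,\dots,u_n)^{\frac{p-1}{p}}.$$
Writing the same bound with $u_0$ and $u_1$ interchanged (the current $T$ involves neither of them) and substituting it back, one solves for $e_p(u_0,u_1,u_2,\dots,u_n)$ to get
$$e_p(u_0,u_1,u_2,\dots,u_n)\le p^{\frac{p}{p-1}}\,e_p(u_0,u_0,u_2,\dots,u_n)^{\frac{p}{p+1}}\,e_p(u_1,u_1,u_2,\dots,u_n)^{\frac1{p+1}},$$
the exact $p>1$ analogue of Theorem~\ref{energy estimate}.

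Then, exactly as in the proof of Theorem~\ref{energy estimate e0 minor}, I would apply \cite[Theorem 4.1]{P99} to the functional $F(u_0,\dots,u_n)=e_p(u_0,\dots,u_n)$, which is symmetric in its last $n$ arguments and satisfies the seed inequality with constant $p^{p/(p-1)}$. Persson's theorem raises that constant to its $\alpha(p,n)$-th power, so
$$e_p(u_0,\dots,u_n)\le p^{\frac{p\,\alpha(p,n)}{p-1}}\,e_p(u_0)^{\frac{p}{n+p}}\,e_p(u_1)^{\frac1{n+p}}\cdots e_p(u_n)^{\frac1{n+p}},$$
which is the claimed bound with $D_p=p^{p\alpha(p,n)/(p-1)}$ for $p>1$.

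The delicate point is the justification of the integration by parts and of the chain rule for $\Delta\bigl((-u_0)^p\bigr)$ when $1<p<2$, since then $(-u_0)^{p-2}$ is unbounded near $\{u_0=0\}$ and $(-u_0)^p$ is neither qpsh nor visibly a difference of qpsh functions. I would handle this by replacing $u_0$ (and likewise $u_1$ and the remaining $u_k$) by decreasing sequences of smooth qpsh approximants, for which all the integrals are finite and the identities classical, deriving the inequality $e_p(u_0^j,u_1,u_2,\dots,u_n)\le p\int_\Omega(-u_1)(-u_0^j)^{p-1}\,\Delta u_0^j\wedge T$ at that level — so that the finiteness of the discarded $\gamma$-term never enters — and then letting $j\to\infty$ via the weak convergence of quaternionic Monge--Amp\`ere currents (Theorem~\ref{converges e} and \cite{W20}) together with monotone convergence. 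Past this point only the exponent bookkeeping already carried out in the $0<p<1$ case remains.
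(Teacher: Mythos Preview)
Your proposal is correct and follows the same route as the paper, which simply states that the theorem follows by combining Theorem~\ref{energy estimate e0 minor} (for $0<p<1$) with \cite[Theorem 3.4]{W20} (for $p\ge1$). Your treatment of the case $p>1$---integration by parts moving $\Delta$ onto $(-u_0)^p$, discarding the nonnegative $\gamma(u_0,u_0)$-term, applying H\"older with exponents $p$ and $p/(p-1)$, iterating with $u_0\leftrightarrow u_1$ to obtain the seed constant $p^{p/(p-1)}$, and then invoking \cite[Theorem 4.1]{P99}---is exactly the argument underlying \cite[Theorem 3.4]{W20}, so you have unpacked the citation rather than taken a different path.
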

	The following result is a direct consequence of Theorem~\ref{energy estimate e0}.
	\begin{theorem}\label{7.10}
		Let $p>0$. Then there exists a constant $C>0$ depending only on $p$ and $n$ such that
		$$e_p(u_0,...,u_n)\leq C \max\limits_{0\leq j\leq n}e_p(u_j),$$
		for every $u_0,...,u_n\in\E_0(\Omega).$
	\end{theorem}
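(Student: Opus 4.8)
The plan is to obtain Theorem~\ref{7.10} as an immediate corollary of Theorem~\ref{energy estimate e0}. Given $u_0,\dots,u_n\in\E_0(\Omega)$, I would first apply Theorem~\ref{energy estimate e0} to get
$$e_p(u_0,\dots,u_n)\le D_p\,e_p(u_0)^{\frac{p}{n+p}}e_p(u_1)^{\frac{1}{n+p}}\cdots e_p(u_n)^{\frac{1}{n+p}},$$
where $D_p$ depends only on $p$ and $n$. Note that all the energies involved are finite: each $u_j\in\E_0(\Omega)$ is bounded with $\int_\Omega(\Delta u_j)^n<\infty$, so $e_p(u_j)=\int_\Omega(-u_j)^p(\Delta u_j)^n<\infty$.

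Next, write $M:=\max_{0\le j\le n}e_p(u_j)$. Since each $e_p(u_j)\ge 0$ and all the exponents $\tfrac{p}{n+p},\tfrac{1}{n+p},\dots,\tfrac{1}{n+p}$ appearing on the right-hand side are strictly positive, replacing every factor $e_p(u_j)$ by $M$ only increases the product, so
$$e_p(u_0,\dots,u_n)\le D_p\,M^{\frac{p}{n+p}+\frac{1}{n+p}+\cdots+\frac{1}{n+p}}=D_p\,M^{\frac{p+n}{n+p}}=D_p\,M.$$
Thus the claim holds with $C:=D_p$. In the degenerate case $M=0$ the first displayed inequality already gives $e_p(u_0,\dots,u_n)\le 0$, and since the mutual $p$-energy is nonnegative this forces $e_p(u_0,\dots,u_n)=0$, so the asserted bound is trivially satisfied.

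I expect no genuine obstacle in this argument: the entire analytic content has already been packaged into Theorem~\ref{energy estimate e0} (and, through it, into the Hölder-type inequality of Theorem~\ref{energy estimate} together with Persson's combinatorial lemma \cite[Theorem 4.1]{P99}). The only thing one has to observe here is the elementary fact that the exponents on the right-hand side of the $p$-energy estimate sum to $1$; the present statement is merely the homogenized, single-constant reformulation that is convenient for the subsequent convergence and stability arguments.
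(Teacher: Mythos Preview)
Your proposal is correct and matches the paper's approach exactly: the paper states that Theorem~\ref{7.10} is a direct consequence of Theorem~\ref{energy estimate e0} and does not even spell out the details. Your argument---bounding each $e_p(u_j)$ by $M=\max_j e_p(u_j)$ and using that the exponents sum to $1$---is precisely the intended one, with $C=D_p$.
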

	We end this subsection by an estimate of Monge-Amp\`ere of a funcion of the class $\E_0(\Omega)$ on an open subset.
	\begin{lemma}\label{7.20 21}
		Let $u\in\E_0(\Omega)$, $U$ be an open subset of $\Omega$ and $p>0$. Then
		$$\int\limits_U (\triangle u)^n\leq D_p\text{Cap}(U)^{\frac{p}{p+n}}e_p(u)^{\frac{n}{p+n}}.$$
	\end{lemma}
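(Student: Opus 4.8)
The plan is to estimate $\int_U(\Delta u)^n$ by testing the measure $(\Delta u)^n$ against the relative extremal function of $U$ and then invoking the $p$-energy H\"older inequality of Theorem~\ref{energy estimate e0}. I would first carry this out for $U\Subset\Omega$ and then reduce the general case to it: choosing $\Omega_j\Subset\Omega$ with $\Omega_j\nearrow\Omega$ and setting $U_j:=U\cap\Omega_j$, one has $U_j\Subset\Omega$ and $U_j\nearrow U$, so $\int_{U_j}(\Delta u)^n\nearrow\int_U(\Delta u)^n$ by continuity from below of the positive Radon measure $(\Delta u)^n$ (Theorem~\ref{converges e}), while $\text{Cap}(U_j)\le\text{Cap}(U)$ since the relative capacity is an increasing set function; letting $j\to\infty$ in the inequality for the $U_j$ then gives the claim (trivially so if $\text{Cap}(U)=\infty$).

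Now assume $U\Subset\Omega$ and let $w:=u_{U,\Omega}^{*}$ be the upper semicontinuous regularization of the relative extremal function $u_{U,\Omega}=\sup\{v\in QPSH^{-}(\Omega):v\le-1\text{ on }U\}$. I rely on the quaternionic counterparts of the classical Bedford--Taylor facts: $-1\le w\le0$, $w\equiv-1$ on the open set $U$, $w\in\E_0(\Omega)$, and $\int_\Omega(\Delta w)^n=\text{Cap}(U)$. (For the first three: the competitor $g:=\max(\rho/\delta,-1)$ with $\delta:=-\sup_U\rho>0$ belongs to $QPSH^{-}(\Omega)$, equals $-1$ on $U$, and tends to $0$ at $\partial\Omega$, so from $g\le u_{U,\Omega}\le0$ we get $w\equiv-1$ on the open set $U$ and $w\to0$ at $\partial\Omega$; then $w\in\E_0(\Omega)$ because $-1\le w\le0$ and $\int_\Omega(\Delta w)^n=\text{Cap}(U)<\infty$.) Granting this, and using $0\le(-w)^p\le1$ on $\Omega$ together with $(-w)^p\equiv1$ on $U$,
$$\int_U(\Delta u)^n=\int_U(-w)^p(\Delta u)^n\le\int_\Omega(-w)^p(\Delta u)^n=e_p(w,u,\dots,u).$$
Applying Theorem~\ref{energy estimate e0} to $(w,u,\dots,u)\in\E_0(\Omega)^{n+1}$ bounds the right-hand side by $D_p\,e_p(w)^{p/(n+p)}e_p(u)^{n/(n+p)}$, while $(-w)^p\le1$ once more gives $e_p(w)=\int_\Omega(-w)^p(\Delta w)^n\le\int_\Omega(\Delta w)^n=\text{Cap}(U)$. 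Chaining these three estimates yields exactly $\int_U(\Delta u)^n\le D_p\,\text{Cap}(U)^{p/(n+p)}e_p(u)^{n/(n+p)}$.

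The genuinely nontrivial part is therefore the bracketed package of properties of the relative extremal function --- that $u_{U,\Omega}^{*}\in\E_0(\Omega)$ for $U\Subset\Omega$ in a quaternionic hyperconvex domain, and that its total Monge--Amp\`ere mass coincides with $\text{Cap}(U)$ (for which one in fact only needs the inequality $\int_\Omega(\Delta w)^n\le\text{Cap}(U)$). If these quaternionic analogues of Bedford--Taylor theory are not already on record, they should be proved first from the comparison principle and the convergence theorems of \cite{W20}; everything afterwards --- the two appeals to $(-w)^p\le1$, the use of Theorem~\ref{energy estimate e0}, and the exhaustion reduction --- is routine.
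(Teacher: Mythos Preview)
Your proposal is correct and follows essentially the same argument as the paper: test $(\Delta u)^n$ against $(-u_{U,\Omega})^p$, apply Theorem~\ref{energy estimate e0}, and then bound $e_p(u_{U,\Omega})$ by $\int_\Omega(\Delta u_{U,\Omega})^n=\text{Cap}(U)$ using $(-u_{U,\Omega})^p\le 1$. The paper simply asserts ``we can assume that $U$ is relatively compact in $\Omega$'' and invokes the standard properties of the extremal function without further comment, whereas you spell out the exhaustion reduction and flag the needed properties of $u_{U,\Omega}^*$ explicitly; apart from this added care, the two proofs are the same.
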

	\begin{proof}
		We can assume that $U$ is relatively compact in $\Omega$. Let $u_{U,\Omega}$ be the extremal function of $U$ in $\Omega$. Then $u_{U,\Omega}\in\E_0(\Omega)$, $u_{U,\Omega}=-1$ on $U$ and $-1\leq u_{U,\Omega}\leq 0$ on $\Omega$. It follows from Theorem~\ref{energy estimate e0} that
		\begin{align*}
			&\int\limits_U (\triangle u)^n\leq \int\limits_\Omega(-u_{U,\Omega})^p (\triangle u)^n
			\leq D_p e_p(u_{U,\Omega})^{\frac{p}{n+p}}e_p(u)^{\frac{n}{n+p}}\\
			\leq &D_p\Big(\int\limits_\Omega(\triangle u_{U,\Omega})^n \Big)^{\frac{p}{n+p}}e_p(u)^{\frac{n}{n+p}}=D_p\text{Cap}(U)^{\frac{p}{p+n}}e_p(u)^{\frac{n}{p+n}}.
		\end{align*}
	\end{proof}
	\subsection{Properties of the finite energy classes}
	\begin{theorem}\label{Ep convex}
		The class $\E_p(\Omega)$ is convex for every $p>0$.
	\end{theorem}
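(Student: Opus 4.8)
The plan is to verify the definition of $\E_p(\Omega)$ head-on. Fix $u,v\in\E_p(\Omega)$ and $t\in[0,1]$, and set $w=tu+(1-t)v$; since $QPSH^-(\Omega)$ is a convex cone, $w\in QPSH^-(\Omega)$, so the only thing to be produced is a sequence in $\E_0(\Omega)$ decreasing to $w$ whose $p$-energies stay bounded.

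First I would fix defining sequences $\{u^j\},\{v^j\}\subset\E_0(\Omega)$ with $u^j\searrow u$, $v^j\searrow v$ and $M:=\sup_j\max\{e_p(u^j),e_p(v^j)\}<\infty$, and set $w^j:=tu^j+(1-t)v^j$. Then $w^j\searrow w$, and $w^j\in\E_0(\Omega)$: it is bounded, negative and vanishes at $\partial\Omega$, while finiteness of its Monge--Amp\`ere mass follows by expanding $(\Delta w^j)^n=\sum_{k=0}^n\binom{n}{k}t^k(1-t)^{n-k}(\Delta u^j)^k\wedge(\Delta v^j)^{n-k}$ and using that each mixed mass $\int_\Omega(\Delta u^j)^k\wedge(\Delta v^j)^{n-k}$ of functions in $\E_0(\Omega)$ is finite (equivalently, that $\E_0(\Omega)$ is a convex cone; see \cite{W20}).

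The main step is the uniform energy bound. Expanding by multilinearity of $\Delta$,
$$\int_\Omega(-w^j)^p(\Delta w^j)^n=\sum_{k=0}^n\binom{n}{k}t^k(1-t)^{n-k}\int_\Omega(-w^j)^p\,(\Delta u^j)^k\wedge(\Delta v^j)^{n-k}.$$
From $-w^j=t(-u^j)+(1-t)(-v^j)\le(-u^j)+(-v^j)$ and the elementary inequality $(a+b)^p\le c_p(a^p+b^p)$ for $a,b\ge0$, with $c_p=\max\{1,2^{p-1}\}$, one gets $(-w^j)^p\le c_p\big((-u^j)^p+(-v^j)^p\big)$, so each summand is bounded by $c_p$ times
$$e_p\big(u^j,\underbrace{u^j,\dots,u^j}_{k},\underbrace{v^j,\dots,v^j}_{n-k}\big)+e_p\big(v^j,\underbrace{u^j,\dots,u^j}_{k},\underbrace{v^j,\dots,v^j}_{n-k}\big).$$
Then I would apply Theorem~\ref{energy estimate e0} to each of these terms: for instance the first is $\le D_p\,e_p(u^j)^{\frac{p+k}{n+p}}e_p(v^j)^{\frac{n-k}{n+p}}$, a weighted geometric mean since the exponents sum to $1$, hence $\le D_p\max\{e_p(u^j),e_p(v^j)\}\le D_pM$, and similarly for the second. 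Since $\sum_{k=0}^n\binom{n}{k}t^k(1-t)^{n-k}=1$, this yields $\sup_j\int_\Omega(-w^j)^p(\Delta w^j)^n\le 2c_pD_pM<\infty$, so $w\in\E_p(\Omega)$ and $\E_p(\Omega)$ is convex.

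I do not expect a genuine obstacle here: the only slightly delicate point is the membership $w^j\in\E_0(\Omega)$, i.e.\ the finiteness of mixed Monge--Amp\`ere masses of $\E_0(\Omega)$ functions, which I would cite from \cite{W20}; everything else is multilinear bookkeeping combined with a single application of Theorem~\ref{energy estimate e0}.
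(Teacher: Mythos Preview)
Your proof is correct and follows essentially the same route as the paper's: expand $(\Delta w^j)^n$ binomially, bound $(-w^j)^p$ by $c_p\big((-u^j)^p+(-v^j)^p\big)$, and control each mixed $p$-energy via the H\"older-type estimate for $\E_0$. The only cosmetic differences are that the paper works with the sum $u+v$ rather than a general convex combination $tu+(1-t)v$, and it invokes the $\max$-form corollary (Theorem~\ref{7.10}) instead of applying Theorem~\ref{energy estimate e0} directly and then bounding the geometric mean by the maximum as you do.
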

	\begin{proof}
		Let $u,v\in\E_p(\Omega)$, $(u^j),(v^j)\subset\E_0(\Omega)$ such that $u^j\searrow u$, $v^j\searrow v$ and
		$$\sup_j e_p(u^j)<\infty\text{ and }\sup_j e_p(v^j)<\infty.$$
		We need to prove that $$\sup_j e_p(u^j+v^j)<\infty.$$
		Indeed, for every $j$, by the fact that there exists constant $C_1>0$, depending only on $p$ such that $(a+b)^p\leq C_1(a^p+b^p)$ for every $a,b>0$, we have
		\begin{align*}
			&\int\limits_\Omega (-u^j-v^j)^p(\triangle(u^j+v^j))^n=\sum\limits_{m=0}^n\begin{pmatrix}
				m\\n
			\end{pmatrix}\int\limits_\Omega (-u^j-v^j)^p(\triangle u^j)^m\wedge(\triangle v^j)^{n-m}\\
			&\leq C_1 \sum\limits_{m=0}^n\begin{pmatrix}
				m\\n
			\end{pmatrix}\Big(\int\limits_\Omega (-u^j)^p(\triangle u^j)^m\wedge(\triangle v^j)^{n-m}+\int\limits_\Omega (-v^j)^p(\triangle u^j)^m\wedge(\triangle v^j)^{n-m}\Big),
		\end{align*}
		Hence, by Theorem\ref{7.10}, we have, for every $j$,
		$$e_p(u^j+v^j)\leq C_2 \sum\limits_{m=0}^n\begin{pmatrix}
			m\\n
		\end{pmatrix} \max \{e_p(u^j),e_p(v^j)\},$$
		where $C_2$ depends only on $p,n$. Therefore,
		$$\sup_j e_p(u^j+v^j)\leq C_2 \sum\limits_{m=0}^n\begin{pmatrix}
			m\\n
		\end{pmatrix} \max \{\sup_j e_p(u^j), \sup_je_p(v^j)\}<\infty,$$
		as desired.
	\end{proof}
	\begin{theorem}\label{7.12}
		Let $u\in\QP^-(\Omega)$ and $v\in\E_p(\Omega)$, $p>0$. If $u\geq v$ then $u\in\E_p(\Omega)$.
	\end{theorem}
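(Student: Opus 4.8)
The plan is to build an explicit defining sequence for $u$ out of one for $v$ by truncation, and then to control its $p$-energy by a self-improving inequality. Since $v\in\E_p(\Omega)$, fix $(v^j)\subset\E_0(\Omega)$ with $v^j\searrow v$ and $M:=\sup_j e_p(v^j)<\infty$, and set $u^j:=\max(u,v^j)$. As $(v^j)$ is decreasing and $v^j\searrow v\le u$, the functions $u^j$ are qpsh, decreasing, and $u^j\searrow\max(u,v)=u$. Moreover $v^j\le u^j\le 0$, so $u^j$ is bounded (from below by $\inf_\Omega v^j>-\infty$) and tends to $0$ along $\partial\Omega$, being squeezed between $v^j$ and $0$. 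Hence the only remaining point for $u^j\in\E_0(\Omega)$ is the finiteness of $\int_\Omega(\Delta u^j)^n$, and for this I would invoke the comparison principle: from $v^j\le u^j\le 0$ with both functions bounded and vanishing on $\partial\Omega$ one gets $\int_\Omega(\Delta u^j)^n\le\int_\Omega(\Delta v^j)^n<\infty$. In particular $e_p(u^j)\le\big(\sup_\Omega(-u^j)\big)^p\int_\Omega(\Delta u^j)^n<\infty$ for every $j$.

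Next I would run the self-improving estimate. Because $v^j\le u^j\le 0$ gives $(-u^j)^p\le(-v^j)^p$ pointwise, we have
$$e_p(u^j)=\int_\Omega(-u^j)^p(\Delta u^j)^n\le\int_\Omega(-v^j)^p(\Delta u^j)^n=e_p(v^j,u^j,\dots,u^j),$$
where on the right-hand side $v^j$ sits in front of a wedge of $n$ copies of $u^j$. Applying the H\"older-type energy estimate, Theorem~\ref{energy estimate e0}, with $u_0=v^j$ and $u_1=\dots=u_n=u^j$ yields
$$e_p(u^j)\le D_p\, e_p(v^j)^{\frac{p}{n+p}}\, e_p(u^j)^{\frac{n}{n+p}}.$$
Since $0\le e_p(u^j)<\infty$, when $e_p(u^j)>0$ we may divide by $e_p(u^j)^{n/(n+p)}$ to get $e_p(u^j)^{p/(n+p)}\le D_p\, e_p(v^j)^{p/(n+p)}$, i.e. $e_p(u^j)\le D_p^{(n+p)/p}e_p(v^j)$, and this is trivial when $e_p(u^j)=0$. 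Taking the supremum over $j$ gives $\sup_j e_p(u^j)\le D_p^{(n+p)/p}M<\infty$, so $(u^j)$ witnesses $u\in\E_p(\Omega)$.

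I expect the crux to be the middle step: rewriting $\int_\Omega(-v^j)^p(\Delta u^j)^n$ as the mixed $p$-energy $e_p(v^j,u^j,\dots,u^j)$ and feeding it into Theorem~\ref{energy estimate e0}. This is precisely what makes the resulting bound on $e_p(u^j)$ involve $e_p(u^j)$ itself but with the subcritical exponent $\tfrac{n}{n+p}<1$, which is exactly what allows the inequality to be absorbed and solved. The one genuinely technical ingredient is the membership $u^j\in\E_0(\Omega)$, i.e. $\int_\Omega(\Delta u^j)^n<\infty$: this does not follow from boundedness plus vanishing boundary values by themselves and must be obtained from monotonicity of the total Monge--Amp\`ere mass under the comparison principle, which is available in the quaternionic setting as well.
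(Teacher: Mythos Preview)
Your proof is correct and follows essentially the same route as the paper's: both produce a sequence $(u^j)\subset\E_0(\Omega)$ decreasing to $u$ with $u^j\ge v^j$, use $(-u^j)^p\le(-v^j)^p$ to bound $e_p(u^j)$ by the mixed energy $e_p(v^j,u^j,\dots,u^j)$, and then apply Theorem~\ref{energy estimate e0} to obtain the self-improving inequality $e_p(u^j)\le D_p\,e_p(v^j)^{p/(n+p)}e_p(u^j)^{n/(n+p)}$, which absorbs to $e_p(u^j)\le D_p^{(n+p)/p}e_p(v^j)$. The only cosmetic difference is that the paper obtains $(u^j)$ by citing an approximation result \cite[Theorem~3.1]{W20}, whereas you construct $u^j=\max(u,v^j)$ explicitly and check membership in $\E_0(\Omega)$ via the total-mass comparison.
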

	\begin{proof}
		By the definition of the class $\E_p(\Omega)$ and \cite[Theorem 3.1]{W20}, we can choose $(u^j)\subset \E_0(\Omega)\cap C(\Omega)$ and $(v^j)\subset \E_0(\Omega)$ such that 
		$$u^j\searrow u,\ v^j\searrow v,\ u^j\geq v^j \text{ and }\sup_j e_p(v^j)<\infty.$$
		We will show that $\sup_j e_p(u^j)<\infty.$
		Indeed, by Theorem~\ref{energy estimate e0}, we have, for every $j$,
		$$e_p(u^j)\leq\int\limits_\Omega(-v^j)^p(\triangle u^j)^n\leq C e_p(v^j)^{\frac{p}{n+p}} e_p(u^j)^{\frac{n}{n+p}},$$
		where $C$ depends only on $p,n$. Hence, for every $j$,
		$$e_p(u^j)\leq C^{\frac{n+p}{p}} e_p(v^j),$$
		which implies 
		$$\sup_j e_p(u^j)\leq C^{\frac{n+p}{p}}\sup_j e_p(v^j)<\infty,$$
		as desired.
	\end{proof}
	From the proof of Theorem~\ref{7.12}, we have the following result.
	\begin{lemma}\label{ep e0}
		Let $u,v\in \E_0(\Omega)$, $u\geq v$ and $p>0$. Then $e_p(u)\leq Ce_p(v)$, where $C>0$ is a constant depending only on $p,n$.
	\end{lemma}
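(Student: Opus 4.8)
The plan is to follow the argument buried inside the proof of Theorem~\ref{7.12}, but now exploiting the stronger hypothesis $u,v\in\E_0(\Omega)$ directly, so that no approximating sequences are needed. First I would note that since $u\geq v$ and both functions are non-positive, we have $0\le -u\le -v$ on $\Omega$, hence $(-u)^p\le(-v)^p$ pointwise. Integrating this inequality against the positive measure $(\triangle u)^n$ gives
$$e_p(u)=\int\limits_\Omega(-u)^p(\triangle u)^n\le\int\limits_\Omega(-v)^p(\triangle u)^n.$$

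Next I would apply the $p$-energy estimate of Theorem~\ref{energy estimate e0} to the $(n+1)$-tuple $(v,u,\dots,u)$, which yields
$$\int\limits_\Omega(-v)^p(\triangle u)^n=e_p(v,u,\dots,u)\le D_p\,e_p(v)^{\frac{p}{n+p}}\,e_p(u)^{\frac{n}{n+p}},$$
since the last $n$ factors each contribute $e_p(u)^{\frac{1}{n+p}}$. Combining the two displays produces the self-improving inequality
$$e_p(u)\le D_p\,e_p(v)^{\frac{p}{n+p}}\,e_p(u)^{\frac{n}{n+p}}.$$

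To conclude I would use that $e_p(u)$ is finite: because $u\in\E_0(\Omega)$ is bounded with $\int_\Omega(\triangle u)^n<\infty$, we have $e_p(u)\le\norm{u}_{L^\infty}^p\int_\Omega(\triangle u)^n<\infty$. If $e_p(u)=0$ the claimed inequality is trivial; otherwise I would divide both sides of the last display by $e_p(u)^{\frac{n}{n+p}}>0$ and raise to the power $\frac{n+p}{p}$, obtaining $e_p(u)\le D_p^{\frac{n+p}{p}}e_p(v)$. Hence the lemma holds with $C=D_p^{\frac{n+p}{p}}$, which depends only on $p$ and $n$.

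This argument is essentially a one-line consequence of Theorem~\ref{energy estimate e0}, so there is no genuine obstacle here; the only point requiring a small amount of care is verifying that $e_p(u)<\infty$ before cancelling the common factor $e_p(u)^{n/(n+p)}$, together with the observation that the resulting constant involves nothing beyond the universal constant $D_p$ from the energy estimate.
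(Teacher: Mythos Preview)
Your proposal is correct and follows essentially the same argument the paper has in mind: it is precisely the computation from the proof of Theorem~\ref{7.12} specialized to $u,v\in\E_0(\Omega)$, namely $e_p(u)\le\int_\Omega(-v)^p(\triangle u)^n\le D_p\,e_p(v)^{p/(n+p)}e_p(u)^{n/(n+p)}$ followed by cancellation to obtain $C=D_p^{(n+p)/p}$. Your explicit check that $e_p(u)<\infty$ before dividing is a welcome bit of extra care that the paper leaves implicit.
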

	The following theorem shows that the quaternionic Monge–Ampère operator is well-defined on the class $\E_p$ for $p>0.$
	\begin{theorem}\label{converge ep}
		Let $u_1,...,u_n\in\E_p(\Omega)$, $p>0$ and $(u_1^j),...,(u_n^j)\subset\E_0(\Omega)$ such that $u_k^j\searrow u_k$ for every $k=1,...,n$ and
		$$\max\limits_{1\leq k\leq n}\sup\limits_j e_p(u_k^j) <\infty.$$
		Then the sequence of measures $\triangle u_1^j\wedge...\wedge\triangle u_n^j$ converges weakly to a positive Radon measure which does not depend on the choice of the sequences $(u_k^j)$. We then define $\triangle u_1\wedge...\wedge\triangle u_n$ to be this weak limit.
	\end{theorem}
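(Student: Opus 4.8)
The plan is to reduce the statement to Theorem~\ref{converges e} by first proving the inclusion $\E_p(\Omega)\subset\E(\Omega)$ for every $p>0$. Once this is known the theorem is immediate: the sequences $(u_k^j)$ here are in particular $\E_0(\Omega)$-approximating sequences for the functions $u_k\in\E(\Omega)$ in the sense of Theorem~\ref{converges e}, so $\triangle u_1^j\wedge\cdots\wedge\triangle u_n^j$ converges weakly to a positive Radon measure which is independent of the choice of \emph{all} such sequences, hence in particular independent of the choice among those additionally satisfying $\max_{k}\sup_{j}e_p(u_k^j)<\infty$.

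For the inclusion, I would fix $u\in\E_p(\Omega)$, a compact set $K\Subset\Omega$, and a sequence $u^j\in\E_0(\Omega)$ with $u^j\searrow u$ and $M:=\sup_j e_p(u^j)<\infty$. Choose open sets $K\Subset U$ and $\overline{U}\subset V\Subset\Omega$, and consider the balayage
$$\tilde u^j:=\Big(\sup\{\,w\in\QP^{-}(\Omega)\colon w\le u^j\ \text{on}\ U\,\}\Big)^{*}.$$
Then $u^j\le\tilde u^j\le 0$ and $\tilde u^j=u^j$ on $U$; since $\rho\le-\delta<0$ on $U$ for some $\delta>0$, one has $\tilde u^j\ge A_j\rho$ for $A_j$ large, so $\tilde u^j\to 0$ at $\partial\Omega$; and $\tilde u^j$ is maximal on $\Omega\setminus\overline{U}$, hence $(\triangle\tilde u^j)^n=0$ there. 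In particular $\tilde u^j\in\E_0(\Omega)$, its total Monge--Amp\`ere mass being $\int_{\overline{U}}(\triangle\tilde u^j)^n<\infty$. By Lemma~\ref{ep e0}, $e_p(\tilde u^j)\le C\,e_p(u^j)\le CM$ with $C=C(p,n)$, and Lemma~\ref{7.20 21} applied on $V$ gives
$$\int\limits_\Omega(\triangle\tilde u^j)^n=\int\limits_{\overline{U}}(\triangle\tilde u^j)^n\le\int\limits_V(\triangle\tilde u^j)^n\le D_p\,\text{Cap}(V)^{\frac{p}{p+n}}(CM)^{\frac{n}{p+n}}=:M'<\infty,$$
uniformly in $j$. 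Since $u^j\searrow u$, the constraint sets decrease and $\tilde u^j\searrow\tilde u:=\big(\sup\{w\in\QP^{-}(\Omega)\colon w\le u\ \text{on}\ U\}\big)^{*}$, which equals $u$ on $U\supset K$; being a decreasing limit of $\E_0(\Omega)$ functions with $\sup_j\int_\Omega(\triangle\tilde u^j)^n\le M'$, it lies in $\F(\Omega)$. Thus $u_K:=\tilde u$ witnesses $u\in\E(\Omega)$, and Theorem~\ref{converges e} finishes the proof.

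The step I expect to be the main obstacle is the pluripotential input used for $\tilde u^j$: that the balayage over $U$ is maximal, hence carries no Monge--Amp\`ere mass, on $\Omega\setminus\overline{U}$, and that its upper semicontinuous regularization is again qpsh. These are classical over $\bC^n$, and the calculus of $d_0,d_1$ from \cite{W17,W20} should let one transfer them, but this has to be carried out with care. For the restricted range $0<p\le 1$ one can avoid the inclusion altogether and argue directly as in Theorem~\ref{converges e}: for $h\in\E_0(\Omega)$ the quantity $\int_\Omega h\,\triangle u_1^j\wedge\cdots\wedge\triangle u_n^j$ is decreasing in $j$ by integration by parts, and it is bounded below because, after rescaling $h$ so that $0\le-h\le 1$, one has $-h\le(-h)^p$, whence $\int_\Omega(-h)\,\triangle u_1^j\wedge\cdots\wedge\triangle u_n^j\le e_p(h,u_1^j,\dots,u_n^j)\le C\max\{e_p(h),M\}$ by Theorem~\ref{7.10}; then \cite[Lemma 3.2]{W20} yields weak convergence and the symmetrisation in the proof of Theorem~\ref{converges e} yields independence.
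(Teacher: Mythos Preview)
Your proposal is correct and follows essentially the same route as the paper: both use a balayage construction to replace the given $\E_0$-sequences, near any fixed compact $K$, by sequences with uniformly bounded total Monge--Amp\`ere mass, and then appeal to the bounded-mass case. The differences are only organizational---you phrase this as the inclusion $\E_p(\Omega)\subset\E(\Omega)$ and then quote Theorem~\ref{converges e}, whereas the paper reproves that argument inline---and in the mass estimate, where you use Lemma~\ref{7.20 21} while the paper uses the more direct bound $\int_K(\triangle u_k^j)^n\le M^{-p}e_p(u_k^j)$ with $M=-\max_k\sup_K u_k^1>0$ (together with \cite[Theorem~1.2]{WZ15}).
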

	\begin{proof}
		We first assume that $\max\limits_{1\leq k\leq n}\sup\limits_j\int\limits_\Omega (\triangle u_k^j)^n<\infty$. Let $h\in\E_0(\Omega)$, we observe that
		$\int\limits_\Omega h\triangle u_1^j\wedge...\wedge \triangle u_n^j$ is decreasing by integration by parts (see \cite[Proposition 3.1]{W20}). We also have
		$$\int\limits_\Omega h\triangle u_1^j\wedge...\wedge \triangle u_n^j\geq (\inf_\Omega h)\max\limits_{1\leq k\leq n}\sup\limits_j\int\limits_\Omega (\triangle u_k^j)^n>-\infty,$$
		by \cite[Corollary 3.1]{W20}. Hence, $\lim\limits_{j\rightarrow\infty}\int\limits_\Omega h\triangle u_1^j\wedge...\wedge \triangle u_n^j$ exists for every $h\in\E_0(\Omega)$. Therefore, by \cite[Lemma 3.2]{W20}, the sequence of measures $\triangle u_1^j\wedge...\wedge\triangle u_n^j$ converges weakly.
		
		Suppose that $(v_1^j),...,(v_n^j)\subset\E_0(\Omega)$ such that $v_k^j\searrow u_k$ for every $k=1,...,n$. By the previous argument, we have $\lim\limits_j\int\limits_\Omega h\triangle v_1^j\wedge...\wedge \triangle v_n^j$ exists for every $h\in\E_0(\Omega)$. By integration by parts, we observe that, for every $h\in\E_0(\Omega)$,
		\begin{align*}
			&\int\limits_\Omega h\triangle v_1^j\wedge...\wedge \triangle v_n^j=\int\limits_\Omega v_1^j\triangle h\wedge\triangle v_2^j\wedge...\wedge \triangle v_n^j\\
			\geq &\int\limits_\Omega u_1\triangle h\wedge\triangle v_2^j\wedge...\wedge \triangle v_n^j
			=\lim\limits_{s_1\rightarrow\infty}\int\limits_\Omega u_1^{s_1}\triangle h\wedge\triangle v_2^j\wedge...\wedge \triangle v_n^j\\
			=&\lim\limits_{s_1\rightarrow\infty}\int\limits_\Omega v_2^j\triangle h\wedge\triangle u_1^{s_1}\wedge...\wedge \triangle v_n^j\geq...\\
			\geq&\lim\limits_{s_1\rightarrow\infty}\lim\limits_{s_2\rightarrow\infty}...\lim\limits_{s_n\rightarrow\infty}\int\limits_\Omega h\triangle u_1^{s_1}\wedge\triangle u_2^{s_2}\wedge...\wedge \triangle u_n^{s_n}\\
			=&\lim\limits_{s\rightarrow\infty}\int\limits_\Omega h\triangle u_1^{s}\wedge\triangle u_2^{s}\wedge...\wedge \triangle u_n^{s}.
		\end{align*}
		Therefore, $$\lim\limits_{j\rightarrow\infty}\int\limits_\Omega h\triangle v_1^j\wedge...\wedge \triangle v_n^j\geq\lim\limits_{j\rightarrow\infty}\int\limits_\Omega h\triangle u_1^j\wedge...\wedge \triangle u_n^j,$$ for every $h\in\E_0(\Omega)$.
		But this is a symmetric situation, we conclude that
		$$\lim\limits_{j\rightarrow\infty}\int\limits_\Omega h\triangle v_1^j\wedge...\wedge \triangle v_n^j=\lim\limits_{j\rightarrow\infty}\int\limits_\Omega h\triangle u_1^j\wedge...\wedge \triangle u_n^j,$$ for every $h\in\E_0(\Omega)$.
		
		Now we remove the restriction $$\max\limits_{1\leq k\leq n}\sup\limits_j\int\limits_\Omega (\triangle u_k^j)^n<\infty.$$ Let $K$ be a compact subset of $\Omega$. For each $j$ and $k=1,...,n$, we consider
		$$h_k^j:=(\sup\{\varphi\in\QP^-(\Omega):\ \varphi\leq u_k^j\text{ on }K\})^*.$$
		We have $h_k^j\in\E_0(\Omega)$, $\text{supp}(\triangle h_k^j)^n\subset K$ and $h_k^j\geq u_k^j$. Then,
		by \cite[Theorem 1.2]{WZ15},
		$$\int\limits_\Omega(\triangle h_k^j)^n=\int\limits_K(\triangle h_k^j)^n\leq \int\limits_K(\triangle u_k^j)^n\leq \frac{1}{M^p}e_p(u_k^j),$$
		where $M=-\max\limits_{1\leq k\leq n}\sup\limits_K
		u^1_k>0$. Hence, since $\max\limits_{1\leq k\leq n}\sup\limits_j e_p(u_k^j) <\infty$, we have $$\max\limits_{1\leq k\leq n}\sup\limits_j\int\limits_\Omega (\triangle h_k^j)^n<\infty.$$
		By Lemma~\ref{ep e0}, it follows that there exists a constant $C$ depending only on $p,n$ such that
		$e_p(h_k^j)\leq Ce_p(u_k^j),$
		and hence $$\max\limits_{1\leq k\leq n}\sup\limits_j e_p(h_k^j)<\infty.$$
		Therefore, $h_k^j$ decreases to a function $v_k\in\E_p(\Omega).$ We also have $v_k=u_k$ on $K$.
		This completes the proof.
	\end{proof}
	\begin{remark}\label{energy ep finite}
		If $u,v\in\E_p(\Omega)$ then $$\int\limits_\Omega (-u)^p(\triangle v)^n<\infty.$$  In particular, $e_p(w)<\infty$ for every $w\in\E_p(\Omega).$
		
		Indeed, let $(u^j),(v^j)\in\E_0(\Omega)$ such that $u^j\searrow u$, $v^j\searrow v$ and $$\sup\limits_je_p(u^j)<\infty,\ \sup\limits_je_p(v^j)<\infty.$$ Then, by Theorem~\ref{energy estimate e0},
		\begin{align*}
			\int\limits_\Omega (-u)^p(\triangle v)^n\leq \liminf\limits_{j\rightarrow\infty}\int\limits_\Omega (-u^j)^p(\triangle v^j)^n
			\leq Ce_p(u^j)^{\frac{p}{p+n}}e_p(v^j)^{\frac{n}{p+n}},
		\end{align*}
		where $C$ depends only on $p,n$. Therefore, $\int\limits_\Omega (-u)^p(\triangle v)^n<\infty$, as desired.
	\end{remark}
	The following result is a maximum principle for the class $\E_p$.
	\begin{theorem}\label{7.22 3.19}
		Let $u_1,...,u_n\in\E_p(\Omega)$, $p>0$ and $v\in\QP^-(\Omega)$. Then
		$$\mathbb{1}_A\triangle u_1\wedge...\wedge\triangle u_n=\mathbb{1}_A\triangle \max(u_1,v)\wedge...\wedge\triangle \max(u_n,v),$$
		where $A=\cap_{k=1}^n\{u_k>v\}$.
	\end{theorem}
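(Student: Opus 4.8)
The statement is the quaternionic version of the Bedford--Taylor maximum principle for mixed Monge--Amp\`ere currents, and I would argue by reducing to the bounded case and then to a single factor. First, a preliminary observation: for each $k$ one has $\max(u_k,v)\in\QP^-(\Omega)$ and $\max(u_k,v)\ge u_k\in\E_p(\Omega)$, so Theorem~\ref{7.12} gives $\max(u_k,v)\in\E_p(\Omega)$; hence $\triangle\max(u_1,v)\wedge\dots\wedge\triangle\max(u_n,v)$ is a well-defined positive Radon measure by Theorem~\ref{converge ep}, and since $A=\bigcap_{k}\{u_k>v\}$ is a Borel set, $\mathbb{1}_A$ applied to either side is meaningful. (If the inclusion $\E_p(\Omega)\subseteq\E(\Omega)$ is available together with the corresponding maximum principle on $\E(\Omega)$, the claim follows at once; the argument below is the self-contained route and would also yield that principle.)

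\textbf{Reduction to one factor (bounded case).} Assume first $u_1,\dots,u_n\in\E_0(\Omega)$ and $v\in\QP^-(\Omega)\cap L^\infty(\Omega)$. It is enough to prove, for bounded qpsh $u,v$ and any current $T=\triangle g_2\wedge\dots\wedge\triangle g_n$ with $g_2,\dots,g_n$ bounded qpsh,
\[
\mathbb{1}_{\{u>v\}}\,\triangle u\wedge T=\mathbb{1}_{\{u>v\}}\,\triangle\max(u,v)\wedge T .
\]
Indeed, write $\tilde u_k=\max(u_k,v)$. Since $A\subseteq\{u_1>v\}$, multiplying the displayed identity (with $u=u_1$, $T=\triangle u_2\wedge\dots\wedge\triangle u_n$) by $\mathbb{1}_A$ gives $\mathbb{1}_A\triangle u_1\wedge\triangle u_2\wedge\dots\wedge\triangle u_n=\mathbb{1}_A\triangle\tilde u_1\wedge\triangle u_2\wedge\dots\wedge\triangle u_n$; applying it again to the second factor (using $A\subseteq\{u_2>v\}$ and commutativity of the mixed operator) replaces $u_2$ by $\tilde u_2$, and after $n$ such steps one reaches $\mathbb{1}_A\triangle\tilde u_1\wedge\dots\wedge\triangle\tilde u_n$, which is the assertion. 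Here one uses that, for bounded qpsh functions, the wedge products are well-defined positive measures, symmetric in their arguments.

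\textbf{The one-factor identity and the passage to $\E_p$.} To prove the displayed identity, choose continuous $u^j\searrow u$ and $v^j\searrow v$ with $u^j,v^j$ bounded qpsh. For fixed $j$ the set $O_j=\{u^j>v^j\}$ is open and $\max(u^j,v^j)=u^j$ there, so by the locality of the mixed quaternionic Monge--Amp\`ere operator on bounded qpsh functions (\cite{WK17,W17}) one has $\mathbb{1}_{O_j}\triangle u^j\wedge T^j=\mathbb{1}_{O_j}\triangle\max(u^j,v^j)\wedge T^j$, $T^j$ the corresponding approximating current. Letting $j\to\infty$, the wedge products converge weakly by continuity along decreasing uniformly bounded sequences, and one compares the $O_j$ with $\{u>v\}$ via $\{u>v\}=\bigcup_j\{u>v^j\}$ and the Borel structure of these sets; a monotone-convergence argument then yields the identity. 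Finally one removes the boundedness of $v$ (replacing $v$ by $\max(v,-N)\searrow v$ and letting $N\to\infty$) and extends from $\E_0(\Omega)$ to $\E_p(\Omega)$: take $u_k^j\in\E_0(\Omega)$ with $u_k^j\searrow u_k$ and $\sup_j e_p(u_k^j)<\infty$, apply the bounded case to $u_1^j,\dots,u_n^j$, and pass to the limit using Theorem~\ref{converge ep} for the weak convergence of both families of wedge products together with monotone convergence for the indicators; Theorems~\ref{7.12}, \ref{energy estimate e0} and Lemma~\ref{ep e0} supply the energy bounds needed to invoke Theorem~\ref{converge ep}.

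\textbf{Main obstacle.} The one point that needs genuine care is the limiting argument: the sets $A$ (and $O_j$, and the intermediate sets produced by the approximations) are in general not open, so multiplication by the indicator does not commute with weak convergence of measures, and one cannot simply invoke locality or continuity at the limit. The remedy is to exhaust the relevant set by open, respectively closed, subsets whose symmetric difference with it is null for all the measures in play, and to combine monotone convergence with the one-factor reduction so that honest locality of the Monge--Amp\`ere operator is applied only on open sets; this is precisely the mechanism used in the plurisubharmonic (\cite{cegrell04}) and $m$-subharmonic (\cite{CN17}) settings, and I expect the quaternionic bookkeeping to be analogous.
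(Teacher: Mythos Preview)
Your outline correctly isolates the difficulty, but the ``monotone-convergence argument'' you invoke at the decisive step is not made to work, and I do not see how it can be made to work in the form you describe. You need to pass from
\[
\mathbb{1}_{O_j}\,\triangle u_1^j\wedge\cdots\wedge\triangle u_n^j=\mathbb{1}_{O_j}\,\triangle\max(u_1^j,v)\wedge\cdots\wedge\triangle\max(u_n^j,v)
\]
to the corresponding identity with $\mathbb{1}_A$ and the limit measures. Weak convergence of measures does not interact well with multiplication by a varying Borel indicator: the sets $O_j$ need not be nested, the limit set $A$ is not open, and ``monotone convergence for the indicators'' requires a \emph{fixed} measure, whereas here both the integrand and the measure change with $j$. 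Your suggested remedy---exhausting by open or closed sets whose symmetric difference with $A$ is null for all measures in play---is exactly what one wants, but you give no mechanism for producing such an exhaustion, and for general qpsh $v$ the sets $\{u_k>v\}$ are only Borel, so this is not automatic.

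The paper avoids the indicator problem by a different device. Instead of $\mathbb{1}_{A_j}$ it multiplies by the quasicontinuous function
\[
g^j=\frac{\psi^j}{\psi^j+\delta},\qquad \psi^j=\max\bigl(\min_k u_k^j-v,\,0\bigr),
\]
which is supported in $A_j$, and shows that $g^j\,\triangle u_1^j\wedge\cdots\wedge\triangle u_n^j$ converges weakly to $g\,\triangle u_1\wedge\cdots\wedge\triangle u_n$ (and similarly for the $\max$ side). The point is that $v$ is quasicontinuous, hence so are $\psi^j,\psi,g^j,g$; one chooses an open set $U$ of capacity $<\epsilon$ outside of which all these functions are genuinely continuous, and controls the contribution over $U$ by the capacity estimate of Lemma~\ref{7.20 21},
\[
\int_U\triangle u_1^j\wedge\cdots\wedge\triangle u_n^j\le\int_U(\triangle(u_1^j+\cdots+u_n^j))^n\le D_p\,\mathrm{Cap}(U)^{p/(p+n)}\,e_p(u_1^j+\cdots+u_n^j)^{n/(p+n)},
\]
which is uniformly small thanks to the energy bounds. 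On $\Omega\setminus U$ the convergence is handled by uniform convergence of $h^j\to h$ (the continuous representatives) and ordinary weak convergence of the measures from Theorem~\ref{converge ep}. Letting $\epsilon\to0$ and then $\delta\to0$ recovers $\mathbb{1}_A$. This quasicontinuity-plus-capacity argument is the missing ingredient in your proposal; your one-factor reduction and the approximation by $\max(v,-N)$ are not needed once it is in place.
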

	\begin{proof}
		Let  $(u_1^j),...,(u_n^j)\subset\E_0(\Omega)$ such that $u_k^j\searrow u_k$ for every $k=1,...,n$ and
		$$\max\limits_{1\leq k\leq n}\sup\limits_j e_p(u_k^j) <\infty.$$
		We can assume that $u_k^j$ is continuous for every $j$ and $k=1,...,n$. We set $v_k^j=\max (u_k^j,v)$ and $A_j=\cap_{k=1}^n\{u_k^j>v\}$. Since $v_k^j\in\QP^-(\Omega)\cap L^\infty(\Omega)$ and $A_j$ is open, we have
		\begin{equation}\label{Aj}
			\mathbb{1}_{A_j}\triangle u_1^j\wedge...\wedge\triangle u_n^j= \mathbb{1}_{A_j}\triangle v_1^j\wedge...\wedge\triangle v_n^j.
		\end{equation}
		We set $u^j=\min(u_1^j,...,u_n^j)$, $u=\min(u_1,...,u_n)$, $\psi^j=\max (u^j-v,0)$ and $\psi=\max(u-v,0)$. Fix $\delta>0$, we set $g^j=\frac{\psi^j}{\psi^j+\delta}$ and $g=\frac{\psi}{\psi+\delta}$. We multiply both sides of equality (\ref{Aj}) with $g^j$ to obtain
		\begin{equation}\label{gj}
			g^j\triangle u_1^j\wedge...\wedge\triangle u_n^j=g^j\triangle v_1^j\wedge...\wedge\triangle v_n^j.
		\end{equation}
		Let $\chi\in C_0^\infty(\Omega)$ and fix $\epsilon>0$. Since $v$ is quasi-continuous, $\psi^j,\ \psi$ are also quasi-continuous. Then, by \cite[Theorem 1.1]{WZ15}, there exists an open subset $U$ of $\Omega$ and there exist functions $\varphi^j,\ \varphi\in C(\Omega)$ such that, $\varphi^j|_{\Omega\setminus U}=\psi^j|_{\Omega\setminus U}$ and $\varphi|_{\Omega\setminus U}=\psi|_{\Omega\setminus U}$. Since $\psi^j\searrow\psi$, $\varphi^j$ converges uniformly to $\varphi$ on $(\Omega\setminus U)\cap\text{supp}\chi$, and hence $h^j=\frac{\varphi^j}{\varphi^j+\delta}$ converges uniformly to $h=\frac{\varphi}{\varphi+\delta}$ on $(\Omega\setminus U)\cap\text{supp}\chi$.
		
		Since $g^j,\ h^j$ are uniformly bounded, we have
		$$ \Big|\int\limits_\Omega \chi g^j \triangle u_1^j\wedge...\wedge\triangle u_n^j-\int\limits_\Omega \chi h^j \triangle u_1^j\wedge...\wedge\triangle u_n^j\Big|\leq C_1\int\limits_U \triangle u_1^j\wedge...\wedge\triangle u_n^j.$$
		Then, since $u_1^j+...+ u_n^j\in \E_0(\Omega)$ (by \cite[Corollary 3.3]{W20}), it follows from Lemma~\ref{7.20 21} that
		\begin{equation}  \label{1} 
			\Big|\int\limits_\Omega \chi g^j \triangle u_1^j\wedge...\wedge\triangle u_n^j-\int\limits_\Omega \chi h^j \triangle u_1^j\wedge...\wedge\triangle u_n^j\Big|\leq C_1\int\limits_U (\triangle (u_1^j+...+ u_n^j))^n\leq C_2\epsilon^{\frac{p}{p+n}}.
		\end{equation}
		Similarly, since $u_1+...+ u_n\in \E_p(\Omega)$ (by Theorem~\ref{Ep convex}), we have
		\begin{align*}
			\Big|\int\limits_\Omega \chi g \triangle u_1\wedge...\wedge\triangle u_n-\int\limits_\Omega \chi h \triangle u_1\wedge...\wedge\triangle u_n\Big|&\leq C_3\int\limits_U (\triangle (u_1+...+ u_n))^n\\
			&\leq C_3\liminf\limits_{j\rightarrow\infty}\int\limits_U (\triangle (u_1^j+...+ u_n^j))^n.
		\end{align*}
		Again, by Lemma~\ref{7.20 21}, we obtain
		\begin{equation}\label{2}
			\Big|\int\limits_\Omega \chi g \triangle u_1\wedge...\wedge\triangle u_n-\int\limits_\Omega \chi h \triangle u_1\wedge...\wedge\triangle u_n\Big|\leq C_4 \epsilon^{\frac{p}{p+n}}.
		\end{equation}
		Moreover, since $\triangle u_1^j\wedge...\wedge u_n^j$ converges weakly to $\triangle u_1\wedge...\wedge \triangle u_n$ (by Theorem~\ref{converge ep}), it follows that
		$$\lim\limits_{j\rightarrow\infty}\int\limits_\Omega\chi h\triangle u_1^j\wedge...\wedge \triangle u_n^j=\int\limits_\Omega\chi h\triangle u_1\wedge...\wedge\triangle u_n.$$
		Hence, we have
		\begin{align*}
			&\limsup\limits_{j\rightarrow\infty}\Big| \int\limits_\Omega \chi h^j\triangle u_1^j\wedge...\wedge \triangle u_n^j-\int\limits_\Omega \chi h\triangle u_1\wedge...\wedge \triangle u_n\Big|\leq \limsup\limits_{j\rightarrow\infty} \int\limits_\Omega\chi|h^j-h|\triangle u_1^j\wedge...\wedge \triangle u_n^j\\
			&=\limsup\limits_{j\rightarrow\infty} \Big(\int\limits_U\chi|h^j-h|\triangle u_1^j\wedge...\wedge \triangle u_n^j+\int\limits_{\Omega\setminus U}\chi|h^j-h|\triangle u_1^j\wedge...\wedge \triangle u_n^j\Big).
		\end{align*}
		Since $h^j$, $h$ are uniformly bounded and $h^j$ converges uniformly to $h$ on $(\Omega\setminus U)\cap\text{supp}\chi$, we see that
		\begin{align*}
			&\limsup\limits_{j\rightarrow\infty}\Big| \int\limits_\Omega \chi h^j\triangle u_1^j\wedge...\wedge \triangle u_n^j-\int\limits_\Omega \chi h\triangle u_1\wedge...\wedge \triangle u_n\Big|\\
			\leq &\limsup\limits_{j\rightarrow\infty} \Big(C_5\int\limits_U\triangle u_1^j\wedge...\wedge \triangle u_n^j+\|h^j-h\|_{L^\infty((\Omega\setminus U)\cap\text{supp}\chi)}\int\limits_\Omega\chi \triangle u_1^j\wedge...\wedge \triangle u_n^j\Big).
		\end{align*}
		Again, by Lemma~\ref{7.20 21}, we obtain
		\begin{equation}\label{3}
			\limsup\limits_{j\rightarrow\infty}\Big| \int\limits_\Omega \chi h^j\triangle u_1^j\wedge...\wedge \triangle u_n^j-\int\limits_\Omega \chi h\triangle u_1\wedge...\wedge \triangle u_n\Big|\leq C_6\epsilon^{\frac{p}{p+n}}.
		\end{equation}
		From inequalities (\ref{1}), (\ref{2}), (\ref{3}), we have
		$$\limsup\limits_{j\rightarrow\infty}\Big| \int\limits_\Omega \chi g^j\triangle u_1^j\wedge...\wedge \triangle u_n^j-\int\limits_\Omega \chi g\triangle u_1\wedge...\wedge \triangle u_n\Big|\leq C_7\epsilon^{\frac{p}{p+n}}.$$
		Here $C_1, C_2,..., C_7$ are positive constants which do not depend on $j,\epsilon.$ Hence, $g^j\triangle u_1^j\wedge...\wedge \triangle u_n^j$ converges weakly to $g\triangle u_1\wedge...\wedge \triangle u_n$. Similarly, we get $g^j\triangle v_1^j\wedge...\wedge \triangle v_n^j$ converges weakly to $g\triangle v_1\wedge...\wedge \triangle v_n$. Therefore, by equality (\ref{gj}), we see that
		$$g\triangle u_1\wedge...\wedge \triangle u_n=g\triangle v_1\wedge...\wedge \triangle v_n.$$
		The result thus follows by letting $\delta$ to $0$.
	\end{proof}
	By the maximum principle and  the H\"older type inequality
	for $p$-energy estimates of the class $\E_p$, we have the following convergence theorem.
	\begin{theorem}\label{7.17}
		Let $u_1,...,u_n\in\E_p(\Omega)$, $p>0$ and $(u_1^j),...,(u_n^j)\subset\E_0(\Omega)$ such that $u_k^j\searrow u_k$ for every $k=1,...,n$ and
		$$\max\limits_{1\leq k\leq n}\sup\limits_j e_p(u_k^j) <\infty.$$
		Then, for every $h\in\E_0(\Omega)$, we have
		$$\lim\limits_{j\rightarrow\infty}\int\limits_\Omega h\triangle u_1^j\wedge...\wedge\triangle u_n^j=\int\limits_\Omega h\triangle u_1\wedge...\wedge\triangle u_n.$$
	\end{theorem}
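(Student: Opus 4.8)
The plan is to upgrade the weak convergence $\triangle u_1^j\wedge\cdots\wedge\triangle u_n^j\to\triangle u_1\wedge\cdots\wedge\triangle u_n$ provided by Theorem~\ref{converge ep} — which a priori only tests against $\chi\in C_c(\Omega)$ — to the merely bounded, quasi--continuous function $h\in\E_0(\Omega)$. Write $\mu_j:=\triangle u_1^j\wedge\cdots\wedge\triangle u_n^j$ and $\mu:=\triangle u_1\wedge\cdots\wedge\triangle u_n$; by linearity in $h$ we may assume $-1\le h\le 0$, and we may assume $h\not\equiv 0$, so $h<0$ on $\Omega$. The conceptual difficulty — and the reason weak convergence alone is not enough — is that a function of $\E_p(\Omega)$ need not have finite total Monge--Amp\`ere mass, so $\sup_j\mu_j(\Omega)$ can be infinite, with mass escaping to $\partial\Omega$. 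I would therefore proceed in two layers: first truncate the potentials from below to land in a finite--mass situation, then show that the truncation defect is \emph{uniformly} negligible because it is carried by a region on which the weight $-h$ (which vanishes at $\partial\Omega$) together with the energy bound keeps it small.

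\emph{Step 1 (truncation).} For $0<p\le 1$ I would take $w_m:=m\,h\in\E_0(\Omega)$, so $w_m\searrow-\infty$; for $p>1$ one replaces $mh$ by $m\,\psi$ with $\psi\in\E_0(\Omega)$ suitably adapted to $h$. Set $u_k^{j,m}:=\max(u_k^j,w_m)\in\E_0(\Omega)$ and $u_k^{m}:=\max(u_k,w_m)\in\E_p(\Omega)$ (Theorem~\ref{7.12}), with $\mu_j^{m}:=\triangle u_1^{j,m}\wedge\cdots\wedge\triangle u_n^{j,m}$ and $\mu^{m}:=\triangle u_1^{m}\wedge\cdots\wedge\triangle u_n^{m}$. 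Since $u_k^{j,m}\ge u_k^j$, Lemma~\ref{ep e0} gives $\sup_j e_p(u_k^{j,m})\le C\sup_j e_p(u_k^j)<\infty$; and since $u_1^{j,m}+\cdots+u_n^{j,m}\ge n\,w_m\in\E_0(\Omega)$, the monotonicity of the total Monge--Amp\`ere mass on $\E_0(\Omega)$ (a standard consequence of the comparison principle, see \cite{W20}) together with the expansion of $(\triangle(u_1^{j,m}+\cdots+u_n^{j,m}))^n$ gives, for each fixed $m$,
\[
\sup_j\mu_j^{m}(\Omega)\le\frac{1}{n!}\int_\Omega\bigl(\triangle(n\,w_m)\bigr)^n=:M_m<\infty .
\]
Since $u_k^{j,m}\searrow u_k^{m}$ as $j\to\infty$, Theorem~\ref{converge ep} yields $\mu_j^{m}\to\mu^{m}$ weakly. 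With the masses now uniformly bounded, passing from $C_c(\Omega)$ to $h$ is routine: I would choose continuous $h^s\in\E_0(\Omega)$ with $h^s\searrow h$ (\cite[Theorem 3.1]{W20}), control the small--capacity exceptional set of the quasi--continuity of $h$ and of the $h^s$ (\cite[Theorem 1.1]{WZ15}) by Lemma~\ref{7.20 21} (so $\mu_j^{m}(U)\le C_m\,\text{Cap}(U)^{p/(p+n)}$, and likewise for $\mu^{m}$), and use that $h$ vanishes on $\partial\Omega$ to discard the part outside a large compact set — exactly as in the proof of Theorem~\ref{7.22 3.19}. This gives
\[
\int_\Omega h\,\mu_j^{m}\xrightarrow[\,j\to\infty\,]{}\int_\Omega h\,\mu^{m}\qquad\text{for each fixed }m .
\]

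\emph{Step 2 (the defect).} By the maximum principle (Theorem~\ref{7.22 3.19} with $v=w_m$), $\mu_j$ and $\mu_j^{m}$ coincide on $A_j^m:=\bigcap_k\{u_k^j>w_m\}$, so $\mu_j-\mu_j^{m}$ is carried by $\bigcup_k\{u_k^j\le w_m\}$ and
\[
\Bigl|\int_\Omega h\,\mu_j-\int_\Omega h\,\mu_j^{m}\Bigr|\le\sum_{k=1}^n\int_{\{u_k^j\le w_m\}}(-h)\,(\mu_j+\mu_j^{m}).
\]
On $\{u_k^j\le w_m\}$ one has $-u_k^j\ge -w_m=m(-h)$ and $-u_k^{j,m}=-w_m=m(-h)$, so, using $0\le -h\le 1$ and $p\le 1$, $\;-h\le(-h)^p\le m^{-p}(-u_k^j)^p$ and $-h\le m^{-p}(-u_k^{j,m})^p$ there; hence each summand is at most
\[
m^{-p}\Bigl(\int_\Omega(-u_k^j)^p\,\mu_j+\int_\Omega(-u_k^{j,m})^p\,\mu_j^{m}\Bigr),
\]
which by the H\"older--type $p$--energy estimate for $\E_0(\Omega)$ (Theorem~\ref{energy estimate e0}) and Lemma~\ref{ep e0} is $\le C\,m^{-p}\sup_j\max_l e_p(u_l^j)$. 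Thus $\sup_j\bigl|\int_\Omega h\,\mu_j-\int_\Omega h\,\mu_j^{m}\bigr|\to 0$ as $m\to\infty$. The identical estimate — with Theorem~\ref{7.22 3.19} applied to $u_1,\dots,u_n\in\E_p(\Omega)$, and $\int_\Omega(-u_k)^p\,\triangle u_1\wedge\cdots\wedge\triangle u_n<\infty$ bounded via Theorem~\ref{energy estimate e0}, Remark~\ref{energy ep finite} and lower semicontinuity — gives $\bigl|\int_\Omega h\,\mu-\int_\Omega h\,\mu^{m}\bigr|\to 0$ as $m\to\infty$. Combining the three estimates by the triangle inequality (fix $m$ large, then let $j\to\infty$) would finish the proof.

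I expect the main obstacle to be Step 2 together with the companion choice of $w_m$: forcing the truncation error to vanish \emph{uniformly in $j$} is exactly where the maximum principle and the H\"older--type $p$--energy estimates must work in tandem, and the case $p\neq 1$ — especially $p>1$, where no bounded finite--mass exhaustion dominates $(-h)^{1/p}$ — calls for a careful, $p$--dependent quantitative form of the argument.
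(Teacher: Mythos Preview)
Your overall architecture---truncate by $\max(u_k^j,\,\cdot\,)$, use the maximum principle (Theorem~\ref{7.22 3.19}) to localize the defect on $\bigcup_k\{u_k^j\le w_m\}$, and kill it with the H\"older-type energy estimate---is exactly the paper's. Two points are worth flagging.

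First, for Step~1 the paper avoids the quasi-continuity/capacity machinery entirely. With the truncated potentials $u_{k,l}^j$ bounded and decreasing to $u_{k,l}$, weak convergence plus lower semicontinuity of $-h$ gives $\int(-h)\,\mu^l\le\liminf_j\int(-h)\,\mu_j^l$, while repeated integration by parts against $h\in\E_0(\Omega)$ together with $u_{k,l}^j\ge u_{k,l}$ gives the reverse inequality $\int(-h)\,\mu_j^l\le\int(-h)\,\mu^l$ for every $j$; squeezing yields the limit directly. Your route would also work, but is heavier.

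Second, your treatment of $p>1$ is left as a gap (``$\psi$ suitably adapted''), and you flag it as the main obstacle. The paper's resolution is clean and worth knowing: set $r=\min(1,1/p)$ and $\psi:=-(-h)^r$. For $p>1$ the map $t\mapsto -(-t)^{1/p}$ is convex increasing on $(-\infty,0]$, so $\psi\in QPSH^-(\Omega)\cap L^\infty(\Omega)$ with zero boundary values, and the truncations $\max(u_k^j,l\psi)$ lie in $\E_0(\Omega)$ since they dominate $u_k^j$. On $\{u_k^j\le l\psi\}$ one then has $(-u_k^j)^p\ge l^p(-\psi)^p=l^p(-h)^{rp}\ge l^p(-h)$ (using $-h\le1$ and $rp\le1$), which produces exactly the $l^{-p}$ factor you need in Step~2, uniformly in $p>0$. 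So the ``careful, $p$-dependent quantitative form'' you anticipated is nothing more than this single choice of $\psi$.
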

	\begin{proof}
		Without loss of generality, we can assume that the right-hand side is finite and $-1\leq h\leq 0$. We set $r=\min(1,\frac{1}{p})$, $\psi=-(-h)^r$, $u_{k,l}^j=\max(u_k^j,l\psi),$ $u_{k,l}=\max(u_k,l\psi)$ for each $k=1,...,n$ and $l,j\in\mathbb{N}$. Then $u_{k,l}^j, u_{k,l}\in\QP^-(\Omega)\cap L^\infty(\Omega)$ and $\lim\limits_{\Omega\ni q\rightarrow\xi}u_{k,l}^j(q)=\lim\limits_{\Omega\ni q\rightarrow\xi}u_{k,l}(q)=0$ for every $\xi\in\partial\Omega$.
		
		Since $\triangle u_{1,l}^j\wedge...\wedge\triangle u_{n,l}^j$ converges weakly to $\triangle u_{1,l}\wedge...\wedge\triangle u_{n,l}$ and $(-h)$ is lower semicontinuous, we have, for every $l$,
		\begin{equation}\label{>=}
			\lim\limits_{j\rightarrow\infty}\int\limits_\Omega(-h)\triangle u_{1,l}^j\wedge...\wedge\triangle u_{n,l}^j\geq\int\limits_\Omega(-h)\triangle u_{1,l}\wedge...\wedge\triangle u_{n,l}.
		\end{equation}
		Moreover, it follows from integration by parts and the decreasing of $(u_{k,l}^j)$ that, for every $l$,
		\begin{equation}\label{<=}
			\int\limits_\Omega(-h)\triangle u_{1,l}^j\wedge...\wedge\triangle u_{n,l}^j\leq\int\limits_\Omega(-h)\triangle u_{1,l}\wedge...\wedge\triangle u_{n,l}.
		\end{equation}
		By inequalities (\ref{>=}) and (\ref{<=}), it follows that, for every $l$,
		\begin{equation}\label{lim1}
			\lim\limits_{j\rightarrow\infty}\int\limits_\Omega(-h)\triangle u_{1,l}^j\wedge...\wedge\triangle u_{n,l}^j=\int\limits_\Omega(-h)\triangle u_{1,l}\wedge...\wedge\triangle u_{n,l}.
		\end{equation}
		
		By Theorem~\ref{7.22 3.19}, we observe that, for every $l,j$,
		$$\int\limits_{\cap_{k=1}^n\{u_k^j>l\psi\}} (-h)\triangle u_{1,l}^j\wedge...\wedge\triangle u_{n,l}^j=\int\limits_{\cap_{k=1}^n\{u_k^j>l\psi\}} (-h)\triangle u_{1}^j\wedge...\wedge\triangle u_{n}^j.$$
		Hence, for every $l,j$,
		\begin{align*}
			&\Big| \int\limits_{\Omega} (-h)\triangle u_{1,l}^j\wedge...\wedge\triangle u_{n,l}^j-\int\limits_{\Omega} (-h)\triangle u_{1}^j\wedge...\wedge\triangle u_{n}^j\Big|\\
			&=\Big|\int\limits_{\cup_{k=1}^n\{u_k^j\leq l\psi\}} (-h)\triangle u_{1,l}^j\wedge...\wedge\triangle u_{n,l}^j-\int\limits_{\cup_{k=1}^n\{u_k^j\leq l\psi\}} (-h)\triangle u_{1}^j\wedge...\wedge\triangle u_{n}^j\Big|\\
			&\leq \int\limits_{\cup_{k=1}^n\{u_k^j\leq l\psi\}} (-h)\triangle u_{1,l}^j\wedge...\wedge\triangle u_{n,l}^j+\int\limits_{\cup_{k=1}^n\{u_k^j\leq l\psi\}} (-h)\triangle u_{1}^j\wedge...\wedge\triangle u_{n}^j\\
			&\leq \frac{1}{l^p}\sum_{k=1}^n\Big( \int\limits_{\{u_k^j\leq l\psi\}} (-h)\triangle u_{1,l}^j\wedge...\wedge\triangle u_{n,l}^j+\int\limits_{\{u_k^j\leq l\psi\}} (-h)\triangle u_{1}^j\wedge...\wedge\triangle u_{n}^j\Big)\\
			&\leq \frac{1}{l^p}\sum_{k=1}^n\Big(\int\limits_\Omega (-u_{k}^j)^p \triangle u_{1,l}^j\wedge...\wedge\triangle u_{n,l}^j+\int\limits_\Omega (-u_{k}^j)^p \triangle u_{1}^j\wedge...\wedge\triangle u_{n}^j\Big).
		\end{align*}
		Then, by Theorem~\ref{7.10} and Lemma~\ref{ep e0}, we have, for every $l,j$,
		\begin{align*}
			&\Big| \int\limits_{\Omega} (-h)\triangle u_{1,l}^j\wedge...\wedge\triangle u_{n,l}^j-\int\limits_{\Omega} (-h)\triangle u_{1}^j\wedge...\wedge\triangle u_{n}^j\Big|\\
			&\leq \frac{C_1}{l^p}\sum_{k=1}^n\Big( \max(e_p(u_k^j),e_p(u_{1,l}^j),...,e_p(u_{n,l}^j))+\max(e_p(u_k^j),e_p(u_{1}^j),...,e_p(u_{n}^j))\Big)\\
			&\leq \frac{C_2}{l^p}\sum_{k=1}^n \max(e_p(u_k^j),e_p(u_{1}^j),...,e_p(u_{n}^j))\\
			&\leq \frac{nC_2}{l^p} \max\limits_{1\leq k\leq n}\sup\limits_j e_p(u_k^j),
		\end{align*}
		where $C_1,C_2$ depend only on $p,n$. Then, by the assumption $\max\limits_{1\leq k\leq n}\sup\limits_j e_p(u_k^j)<\infty$, we see that, for every $j$,
		\begin{equation}\label{lim2}
			\lim\limits_{l\rightarrow\infty}\int\limits_{\Omega} (-h)\triangle u_{1,l}^j\wedge...\wedge\triangle u_{n,l}^j=\int\limits_{\Omega} (-h)\triangle u_{1}^j\wedge...\wedge\triangle u_{n}^j
		\end{equation}
		
		Now, by equalities (\ref{lim1}) and (\ref{lim2}), it follows that
		\begin{align*}
			\lim\limits_{j\rightarrow\infty}\int\limits_\Omega (-h)\triangle u_{1}^j\wedge...\wedge\triangle u_{n}^j&=
			\lim\limits_{j\rightarrow\infty} \lim\limits_{l\rightarrow\infty}\int\limits_{\Omega} (-h)\triangle u_{1,l}^j\wedge...\wedge\triangle u_{n,l}^j\\
			&=\lim\limits_{l\rightarrow\infty}\int\limits_\Omega(-h)\triangle u_{1,l}\wedge...\wedge\triangle u_{n,l}\\
			&=\int\limits_\Omega(-h)\triangle u_{1}\wedge...\wedge\triangle u_{n},
		\end{align*}
		where the last equality follows from Theorem~\ref{7.22 3.19}.
		The proof is completed.
	\end{proof}
	The next result is a consequence of Theorem~\ref{7.17} and Theorem~\ref{energy estimate e0}.
	\begin{theorem}\label{h triangle u finite}
		Let $u_1,...,u_n\in\E_p(\Omega)$, $0<p\leq 1$.
		Then, for every $h\in\E_0(\Omega)$, we have
		$$\int\limits_\Omega (-h)\triangle u_1\wedge...\wedge\triangle u_n<\infty.$$
	\end{theorem}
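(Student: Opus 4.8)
The plan is to reduce to the case $0<p\leq 1$ so that Theorem~\ref{energy estimate e0} is available with the stated $D_p$, and then approximate. First I would fix approximating sequences $(u_k^j)\subset\E_0(\Omega)$ with $u_k^j\searrow u_k$ and $\sup_j e_p(u_k^j)<\infty$ for each $k$; such sequences exist by definition of $\E_p(\Omega)$. The idea is to estimate $\int_\Omega(-h)\,\triangle u_1^j\wedge\cdots\wedge\triangle u_n^j$ uniformly in $j$, and then pass to the limit using Theorem~\ref{7.17}.

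For the uniform estimate I would use that $h\in\E_0(\Omega)$, so $-h$ is bounded, say $0\leq -h\leq M$; but more usefully, since the exponent $p$ is at most $1$, I want to compare $(-h)$ with $(-h)^p$ up to the height of $h$. Concretely, writing $h_0$ for a fixed member of the approximating data, one has $(-h)\leq C(-h)^p$ on $\Omega$ when $0<p\leq 1$ and $-h\leq 1$ (after normalising), so
\begin{align*}
\int\limits_\Omega(-h)\,\triangle u_1^j\wedge\cdots\wedge\triangle u_n^j
&\leq C\int\limits_\Omega(-h)^p\,\triangle u_1^j\wedge\cdots\wedge\triangle u_n^j\\
&= C\,e_p(h,u_1^j,\dots,u_n^j).
\end{align*}
Now apply the H\"older type inequality of Theorem~\ref{energy estimate e0} to the $(n+1)$-tuple $(h,u_1^j,\dots,u_n^j)\in\E_0(\Omega)^{n+1}$:
\begin{align*}
e_p(h,u_1^j,\dots,u_n^j)\leq D_p\,e_p(h)^{\frac{p}{n+p}}\,e_p(u_1^j)^{\frac{1}{n+p}}\cdots e_p(u_n^j)^{\frac{1}{n+p}}.
\end{align*}
Since $e_p(h)<\infty$ (as $h\in\E_0(\Omega)$, so $(-h)^p(\triangle h)^n$ is a bounded function times a finite mass) and each $\sup_j e_p(u_k^j)<\infty$, the right-hand side is bounded uniformly in $j$. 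Hence $\sup_j\int_\Omega(-h)\,\triangle u_1^j\wedge\cdots\wedge\triangle u_n^j<\infty$.

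Finally, by Theorem~\ref{7.17}, $\int_\Omega(-h)\,\triangle u_1^j\wedge\cdots\wedge\triangle u_n^j\to\int_\Omega(-h)\,\triangle u_1\wedge\cdots\wedge\triangle u_n$, so the limit is finite as well, which is the claim. The main obstacle I anticipate is purely bookkeeping: making sure the normalisation $-h\leq 1$ is harmless (one may replace $h$ by $h/\|h\|_{L^\infty}$, which only changes the integral by a constant factor, or invoke that $\E_0(\Omega)$ is a cone so $h/\|h\|_\infty\in\E_0(\Omega)$), and checking that Theorem~\ref{7.17} applies with the chosen sequences — but its hypotheses ($u_k^j\searrow u_k$ with $\sup_j e_p(u_k^j)<\infty$ and $h\in\E_0(\Omega)$) are exactly what we have arranged. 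Note the restriction $0<p\leq1$ is used precisely in the elementary inequality $(-h)\leq C(-h)^p$ on a set where $-h\leq1$; for $p>1$ this step fails, which is why the theorem is stated only in that range.
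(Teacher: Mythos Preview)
Your proposal is correct and follows essentially the same route as the paper: pick approximating sequences $(u_k^j)\subset\E_0(\Omega)$ with uniformly bounded $p$-energies, use the elementary inequality $(-h)\leq(-\inf_\Omega h)^{1-p}(-h)^p$ (which is your normalisation step made explicit) together with Theorem~\ref{energy estimate e0} to bound $\int_\Omega(-h)\,\triangle u_1^j\wedge\cdots\wedge\triangle u_n^j$ uniformly in $j$, and then invoke Theorem~\ref{7.17} to pass to the limit. The only cosmetic difference is that the paper carries the constant $(-\inf_\Omega h)^{1-p}$ through the computation rather than normalising $h$ at the outset.
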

	\begin{proof}
		Let $(u_1^j),...,(u_n^j)\subset\E_0(\Omega)$ such that $u_k^j\searrow u_k$ for every $k=1,...,n$ and
		$$M=\max\limits_{1\leq k\leq n}\sup\limits_j e_p(u_k^j) <\infty.$$
		By Theorem~\ref{7.17} and Theorem~\ref{energy estimate e0}, we have
		\begin{align*}
			\int\limits_\Omega (-h)\triangle u_1\wedge...\wedge\triangle u_n&=\lim\limits_{j\rightarrow\infty}\int\limits_\Omega (-h)\triangle u_{1}^j\wedge...\wedge\triangle u_{n}^j\\
			&\leq (-\inf\limits_\Omega h)^{1-p} \lim\limits_{j\rightarrow\infty}\int\limits_\Omega (-h)^p\triangle u_{1}^j\wedge...\wedge\triangle u_{n}^j\\
			&\leq C (-\inf\limits_\Omega h)^{1-p} e_p(h)^\frac{p}{p+n}\lim\limits_{j\rightarrow\infty}e_p(u_{1}^j)^\frac{1}{p+n}...e_p(u_{n}^j)^\frac{1}{p+n}\\
			&\leq C (-\inf\limits_\Omega h)^{1-p} e_p(h)^\frac{p}{p+n} M^\frac{n}{p+n},
		\end{align*}
		where $C>0$ depends only on $p,n$. Therefore 
		$$\int\limits_\Omega h\triangle u_1\wedge...\wedge\triangle u_n<\infty,$$
		as desired.
	\end{proof}
	The following three theorems are comparison principles for the class $\E_p$.
	\begin{theorem}\label{compa when u<v}
		Let $u,v\in\E_p(\Omega)$, $p>0$, such that $u\leq v$ on $\Omega$. Then
		$$\int\limits_\Omega (\triangle u)^n\geq \int\limits_\Omega (\triangle v)^n.$$
	\end{theorem}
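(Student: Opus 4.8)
To prove Theorem~\ref{compa when u<v} my plan is to reduce to bounded functions by approximation, establish a \emph{weighted} version of the inequality inside the class $\E_0(\Omega)$ by integration by parts, and then remove the weight by a monotone convergence argument. First I would fix approximating sequences: by the definition of $\E_p(\Omega)$ there are $(u^j),(v^j)\subset\E_0(\Omega)$ with $u^j\searrow u$, $v^j\searrow v$ and $\sup_j e_p(u^j)<\infty$, $\sup_j e_p(v^j)<\infty$. Since $u\le v$, the functions $\max(u^j,v^j)$ still decrease to $\max(u,v)=v$, lie in $\E_0(\Omega)$, and dominate $v^j$, so by Lemma~\ref{ep e0} they satisfy $e_p(\max(u^j,v^j))\le C\,e_p(v^j)$; replacing $v^j$ by $\max(u^j,v^j)$ I may thus assume $u^j\le v^j$ for every $j$.

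Next I would prove that, for every $h\in\E_0(\Omega)$ and every $j$,
$$\int_\Omega (-h)\,(\triangle u^j)^n\ \ge\ \int_\Omega (-h)\,(\triangle v^j)^n.$$
To this end I use the telescoping identity
$$(\triangle v^j)^n-(\triangle u^j)^n=\triangle(v^j-u^j)\wedge\sum_{k=0}^{n-1}(\triangle v^j)^k\wedge(\triangle u^j)^{n-1-k},$$
multiply by $h$, integrate over $\Omega$, and integrate by parts (\cite[Proposition 3.1]{W20}) in each summand to transfer $\triangle$ from $v^j-u^j$ onto $h$:
$$\int_\Omega h\,\triangle(v^j-u^j)\wedge S_k=\int_\Omega (v^j-u^j)\,\triangle h\wedge S_k\ \ge\ 0,$$
where $S_k:=(\triangle v^j)^k\wedge(\triangle u^j)^{n-1-k}$ is a closed positive current, $v^j-u^j\ge 0$, and $\triangle h\ge 0$ because $h$ is qpsh. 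Summing over $k$ gives $\int_\Omega h(\triangle v^j)^n\ge\int_\Omega h(\triangle u^j)^n$, i.e.\ the displayed inequality; every integral occurring is finite because $u^j,v^j,h$ are bounded and the mixed Monge--Amp\`ere masses of functions in $\E_0(\Omega)$ are finite.

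Finally I would pass to the limit. Let $K_l\nearrow\Omega$ be an exhaustion by compact sets and let $h_l\in\E_0(\Omega)$ be the extremal function of $K_l$ in $\Omega$, so that $-1\le h_l\le 0$ and $h_l\searrow -1$ pointwise on $\Omega$. Fixing $l$, taking $h=h_l$ in the weighted inequality and letting $j\to\infty$, Theorem~\ref{7.17} (applied with all $n$ entries equal to $u^j$, respectively $v^j$) yields
$$\int_\Omega (-h_l)\,(\triangle u)^n\ \ge\ \int_\Omega (-h_l)\,(\triangle v)^n.$$
Letting $l\to\infty$, monotone convergence turns the two sides into $\int_\Omega(\triangle u)^n$ and $\int_\Omega(\triangle v)^n$ (a priori elements of $[0,+\infty]$), and the theorem follows.

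The step I expect to be the main obstacle is the integration-by-parts manipulation in the second paragraph: one has to verify carefully that each $S_k$ is a closed positive current (the locally bounded case of the Wan--Kang calculus) and that all the integrals in the computation are finite, so that the difference of the two $n$-fold products can be reorganized as written. Granting this, the rest reduces to routine monotone-convergence bookkeeping.
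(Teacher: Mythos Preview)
Your proposal is correct and follows essentially the same route as the paper's proof: approximate by sequences in $\E_0$, arrange $u^j\le v^j$, establish the weighted inequality $\int_\Omega(-h)(\triangle v^j)^n\le\int_\Omega(-h)(\triangle u^j)^n$ via integration by parts, pass to the limit using Theorem~\ref{7.17}, and finally let $h\searrow -1$. The only differences are cosmetic: you spell out the reduction to $u^j\le v^j$ (which the paper takes for granted), you unfold the integration-by-parts step through the telescoping identity (the paper invokes it in one line), and you take $h$ to be extremal functions of an exhaustion rather than an arbitrary $h\in\E_0(\Omega)\cap C(\Omega)$ decreasing to $-1$.
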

	\begin{proof}
		Let $(u^j),(v^j)$ be two sequences in $ \E_0(\Omega)$ decreasing to $u,v$ and 
		$$\sup\limits_j e_p(u^j)<\infty, \ \sup\limits_j e_p(v^j)<\infty.$$ We can suppose that $u^j\leq v^j$ for every $j$. Let $h\in\E_0(\Omega)\cap C(\Omega).$ It follows from integration by parts that, for every $j$,
		$$\int\limits_\Omega (-h)(\triangle v^j)^n\leq \int\limits_\Omega (-h)(\triangle u^j)^n.$$
		By Theorem~\ref{7.17}, we obtain
		$$\lim\limits_{j\rightarrow\infty}\int\limits_\Omega (-h)(\triangle u^j)^n=\int\limits_\Omega (-h)(\triangle u)^n,$$
		and
		$$\lim\limits_{j\rightarrow\infty}\int\limits_\Omega (-h)(\triangle v^j)^n=\int\limits_\Omega (-h)(\triangle v)^n.$$
		Therefore,
		$$\int\limits_\Omega (-h)(\triangle v)^n\leq \int\limits_\Omega (-h)(\triangle u)^n.$$
		The proof is thus complete by letting $h$ decrease to $-1$.
	\end{proof}
	\begin{theorem}\label{compare on u<v}
		Let $u,v\in \E_p(\Omega)$ and $p>0$. Then
		$$\int\limits_{\{u<v\}}(\triangle u)^n\geq \int\limits_{\{u<v\}}(\triangle v)^n.$$
	\end{theorem}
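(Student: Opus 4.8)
The plan is to derive Theorem~\ref{compare on u<v} from the global comparison principle (Theorem~\ref{compa when u<v}) together with the maximum principle (Theorem~\ref{7.22 3.19}), the only real difficulty being the diagonal set $\{u=v\}$, which is disposed of by a small perturbation of $v$. Throughout, recall that for $\varphi_1,\dots,\varphi_n\in\E_p(\Omega)$ the measure $\triangle\varphi_1\wedge\cdots\wedge\triangle\varphi_n$ is a well-defined positive Radon (in particular locally finite) measure, by Theorem~\ref{converge ep}, and that $\max(\varphi,\psi)\in\E_p(\Omega)$ whenever $\varphi\in\E_p(\Omega)$ and $\varphi\le\psi$, by Theorem~\ref{7.12}.

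\emph{Step 1 (one pair, up to the diagonal).} Let $u,v'\in\E_p(\Omega)$ and set $w'=\max(u,v')\in\E_p(\Omega)$. Theorem~\ref{7.22 3.19}, applied first with $u_1=\dots=u_n=u$ and comparison function $v'$, and then with $u_1=\dots=u_n=v'$ and comparison function $u$, gives
\begin{equation*}
\mathbb{1}_{\{u>v'\}}(\triangle u)^n=\mathbb{1}_{\{u>v'\}}(\triangle w')^n,\qquad \mathbb{1}_{\{u<v'\}}(\triangle v')^n=\mathbb{1}_{\{u<v'\}}(\triangle w')^n .
\end{equation*}
Since $u\le w'$, Theorem~\ref{compa when u<v} gives $\int_\Omega(\triangle u)^n\ge\int_\Omega(\triangle w')^n$. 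Writing each side as the sum of its restrictions to the Borel partition $\{u>v'\}\cup\{u=v'\}\cup\{u<v'\}$ and cancelling the equal pieces over $\{u>v'\}$ yields
\begin{equation*}
\int_{\{u<v'\}}(\triangle v')^n-\int_{\{u<v'\}}(\triangle u)^n\ \le\ \int_{\{u=v'\}}(\triangle u)^n-\int_{\{u=v'\}}(\triangle w')^n\ \le\ \int_{\{u=v'\}}(\triangle u)^n .
\end{equation*}

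\emph{Step 2 (perturbation and limit).} Fix $\psi\in\E_0(\Omega)$ with $\psi<0$ on $\Omega$ (for instance a relative extremal function of a closed ball $\bar B\Subset\Omega$, as in Lemma~\ref{7.20 21}), and for $\varepsilon>0$ put $v_\varepsilon=(1+\varepsilon)v+\varepsilon\psi=v+\varepsilon(v+\psi)$. Then $v_\varepsilon\in\E_p(\Omega)$ (it lies in $\E_p(\Omega)$ since that class is stable under positive scaling and under addition, cf.\ Theorem~\ref{Ep convex}), and since $v+\psi<0$ we have $v_\varepsilon\le v$ and $v_\varepsilon\uparrow v$ as $\varepsilon\downarrow0$, so $\{u<v_\varepsilon\}\uparrow\{u<v\}$. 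For $\varepsilon_1\ne\varepsilon_2$, any $q\in\{u=v_{\varepsilon_1}\}\cap\{u=v_{\varepsilon_2}\}$ with $v(q)>-\infty$ would satisfy $(\varepsilon_1-\varepsilon_2)(v(q)+\psi(q))=0$, which is impossible; hence the sets $E_\varepsilon:=\{u=v_\varepsilon\}$ are pairwise disjoint outside $\{v=-\infty\}$, and $(\triangle u)^n(\{v=-\infty\})=0$ because $\int_\Omega(-v)^p(\triangle u)^n<\infty$ by Remark~\ref{energy ep finite}. Consequently $(\triangle u)^n(E_\varepsilon)=0$ for all but countably many $\varepsilon$; pick such a sequence $\varepsilon_k\downarrow0$. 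For these $\varepsilon$, Step~1 applied to the pair $(u,v_\varepsilon)$ gives $\int_{\{u<v_\varepsilon\}}(\triangle v_\varepsilon)^n\le\int_{\{u<v_\varepsilon\}}(\triangle u)^n$; expanding $(\triangle v_\varepsilon)^n=\bigl((1+\varepsilon)\triangle v+\varepsilon\triangle\psi\bigr)^n$ and discarding the non-negative lower-order terms gives $(\triangle v_\varepsilon)^n\ge(1+\varepsilon)^n(\triangle v)^n$, so, using $\{u<v_\varepsilon\}\subset\{u<v\}$,
\begin{equation*}
(1+\varepsilon)^n\int_{\{u<v_\varepsilon\}}(\triangle v)^n\ \le\ \int_{\{u<v_\varepsilon\}}(\triangle v_\varepsilon)^n\ \le\ \int_{\{u<v_\varepsilon\}}(\triangle u)^n\ \le\ \int_{\{u<v\}}(\triangle u)^n .
\end{equation*}
Letting $\varepsilon=\varepsilon_k\downarrow0$, monotone convergence for the measure $(\triangle v)^n$ replaces the left-hand side by $\int_{\{u<v\}}(\triangle v)^n$ (valid even when this is $+\infty$), and the theorem follows.

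\emph{The main obstacle.} The crux is the diagonal $\{u=v\}$: off it the two maximum-principle identities determine $(\triangle\max(u,v))^n$ completely, but on $\{u=v\}$ there is no a priori control of the sign of $\int_{\{u=v\}}\bigl((\triangle u)^n-(\triangle\max(u,v))^n\bigr)$, which is exactly what a direct argument leaves over. The perturbation $v_\varepsilon$ is engineered so that its diagonals $E_\varepsilon$ form an essentially pairwise disjoint family, hence are $(\triangle u)^n$-negligible for generic $\varepsilon$; the strictly negative summand $\varepsilon\psi$ is genuinely needed, since the bare rescaling $v\mapsto(1+\varepsilon)v$ does not separate the diagonals over $\{v=0\}$.
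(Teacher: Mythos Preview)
Your overall strategy---combine the maximum-principle identities of Theorem~\ref{7.22 3.19} with the global mass comparison of Theorem~\ref{compa when u<v}, and eliminate the diagonal $\{u=v\}$ by a one-parameter perturbation whose level sets are essentially disjoint---is exactly the paper's. Your perturbation $v_\varepsilon=(1+\varepsilon)v+\varepsilon\psi$ is a cleaner, more explicit variant of what the paper does (the paper simply notes that $\int_{\{v=ru\}}(-h)(\triangle u)^n=0$ for a.e.\ $r$ and thereby ``restricts'' to the case where the diagonal has zero mass).

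There is, however, a genuine gap in Step~1. From $\int_\Omega(\triangle u)^n\ge\int_\Omega(\triangle w')^n$ you ``cancel the equal pieces over $\{u>v'\}$.'' That subtraction is legitimate only when $\int_{\{u>v'\}}(\triangle u)^n<\infty$. For $u\in\E_p(\Omega)$ with $0<p\le1$ the total mass $\int_\Omega(\triangle u)^n$ can be infinite; this is precisely why the paper singles out $p>1$ (already done in \cite{W20}) and treats $0<p\le1$ separately. Reducing to the case $\int_{\{u<v\}}(\triangle u)^n<\infty$---legitimate, since otherwise the statement is trivial---tells you nothing about the mass over $\{u>v_\varepsilon\}\supset\{u\ge v\}$, so in Step~2 the inequality $\int_{\{u<v_\varepsilon\}}(\triangle v_\varepsilon)^n\le\int_{\{u<v_\varepsilon\}}(\triangle u)^n$ is unjustified as written.

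The repair is exactly the device the paper uses: run the entire argument with a bounded weight $(-h)$, $h\in\E_0(\Omega)\cap C(\Omega)$. By Theorem~\ref{h triangle u finite} every integral appearing is then finite, so the cancellation in Step~1 is valid and gives $\int_{\{u<v'\}}(-h)(\triangle v')^n\le\int_{\{u\le v'\}}(-h)(\triangle u)^n$; your Step~2 (disjoint diagonals, monotone convergence as $\varepsilon\downarrow0$) goes through verbatim with the weight in place, and a final passage $h\downarrow-1$ by monotone convergence removes it. With this one insertion your proof is complete and matches the paper's.
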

	\begin{proof}
		The case $p>1$ was handled in \cite[Proposition 4.2]{W20}. We consider the case $0<p\leq 1$. Let $h\in\E_0(\Omega)\cap C(\Omega)$. 
		We see that for almost everywhere $r$,
		$$\int\limits_{\{v=ru\}}(-h)(\triangle u)^n=0.$$
		Therefore, we can restrict to the case
		\begin{equation}\label{a}
			\int\limits_{\{v=u\}}(-h)(\triangle u)^n=0.
		\end{equation}
		From Theorem~\ref{7.22 3.19}, we obtain
		\begin{equation}\label{b}
			\mathbb{1}_{\{u<v\}}(\triangle v)^n=\mathbb{1}_{\{u<v\}}(\triangle \max(u,v))^n\text{ and }\mathbb{1}_{\{u>v\}}(\triangle u)^n=\mathbb{1}_{\{u<v\}}(\triangle \max(u,v))^n.
		\end{equation}
		As in proof of Theorem~\ref{compa when u<v}, we have
		\begin{equation}\label{c}
			\int\limits_\Omega (-h)(\triangle \max(u,v))^n\leq\int\limits_\Omega (-h)(\triangle u)^n.
		\end{equation}
		Therefore, from inequalities (\ref{a}), (\ref{b}) and (\ref{c}), we obtain
		\begin{align*}
			&\int\limits_{\{u<v\}}(-h)(\triangle v)^n
			=\int\limits_{\{u<v\}}(-h)(\triangle \max(u,v))^n\\
			&\leq \int\limits_\Omega(-h)(\triangle \max(u,v))^n-\int\limits_{\{u>v\}}(-h)(\triangle \max(u,v))^n\\
			&\leq  \int\limits_\Omega(-h)(\triangle u)^n-\int\limits_{\{u>v\}}(-h)(\triangle u)^n\\
			&=\int\limits_{\{u<v\}}(-h)(\triangle u)^n.
		\end{align*}
		We notice that all terms are finite by Theorem~\ref{h triangle u finite}, hence all inequalites make sense.
		The proof is thus complete by letting $h$ decrease to $-1$.
	\end{proof}
	\begin{theorem}\label{compare implies u<v}
		Let $u,v\in\E_p(\Omega)$ ($p>0$) satisfying $(\triangle u)^n\geq (\triangle v)^n$. Then $u\leq v$ on $\Omega$.
	\end{theorem}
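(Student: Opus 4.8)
\section*{Proof proposal}

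My plan is to show that $w:=\max(u,v)$ has the same Monge--Amp\`ere measure as $v$ and then to conclude $w=v$, which is precisely the assertion $u\le v$ on $\Omega$. Since $w\in\QP^-(\Omega)$ and $w\ge v\in\E_p(\Omega)$, Theorem~\ref{7.12} gives $w\in\E_p(\Omega)$, so $(\triangle w)^n$ is well defined and the results of Section~\ref{sec3} apply to the pair $(v,w)$; moreover the weighted integrals occurring below are finite by Theorem~\ref{h triangle u finite} when $0<p\le 1$, and when $p>1$ because $\E_p(\Omega)$ then consists of functions of finite total Monge--Amp\`ere mass.

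First I would prove $(\triangle w)^n\ge(\triangle v)^n$ as measures. On $\{u>v\}$ the maximum principle (Theorem~\ref{7.22 3.19} with $u_1=\dots=u_n=u$) gives $\mathbb{1}_{\{u>v\}}(\triangle w)^n=\mathbb{1}_{\{u>v\}}(\triangle u)^n\ge\mathbb{1}_{\{u>v\}}(\triangle v)^n$ by hypothesis; on the open set $\{u<v\}$ one has $w\equiv v$, so $\mathbb{1}_{\{u<v\}}(\triangle w)^n=\mathbb{1}_{\{u<v\}}(\triangle v)^n$; and on the contact set $\{u=v\}$ one uses the local inequality $\mathbb{1}_{\{u=v\}}(\triangle\max(u,v))^n\ge\mathbb{1}_{\{u=v\}}(\triangle v)^n$, which follows from the corresponding statement for bounded qpsh functions and the convergence theorems. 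For the reverse inequality I would mimic the proof of Theorem~\ref{compa when u<v}: for $h\in\E_0(\Omega)\cap C(\Omega)$ and decreasing $\E_0$-approximants $v^j\searrow v$, $w^j\searrow w$ arranged so that $v^j\le w^j$, integration by parts yields $\int_\Omega(-h)(\triangle w^j)^n\le\int_\Omega(-h)(\triangle v^j)^n$, and Theorem~\ref{7.17} gives $\int_\Omega(-h)(\triangle w)^n\le\int_\Omega(-h)(\triangle v)^n$. Combining the two steps, the nonnegative measure $(\triangle w)^n-(\triangle v)^n$ has nonpositive $(-h)$-integral for every $h\in\E_0(\Omega)$, and since there is such an $h$ with $h<0$ throughout $\Omega$, this forces $(\triangle w)^n=(\triangle v)^n$.

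It remains to deduce $w=v$ from $w\ge v$, $v,w\in\E_p(\Omega)$, and $(\triangle w)^n=(\triangle v)^n$. For $h\in\E_0(\Omega)$, the telescoping identity $(\triangle v)^n-(\triangle w)^n=\triangle(v-w)\wedge\sum_{k=0}^{n-1}(\triangle v)^k\wedge(\triangle w)^{n-1-k}$ combined with integration by parts gives
$$0=\int_\Omega(-h)\big[(\triangle v)^n-(\triangle w)^n\big]=\sum_{k=0}^{n-1}\int_\Omega(w-v)\,\triangle h\wedge(\triangle v)^k\wedge(\triangle w)^{n-1-k}.$$
Since $w-v\ge 0$ and every $\triangle h\wedge(\triangle v)^k\wedge(\triangle w)^{n-1-k}$ is a nonnegative measure, each summand on the right vanishes, so $w=v$ almost everywhere with respect to each of these measures. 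Letting $h$ run through the truncations $\max(v,-j)\in\E_0(\Omega)$ (whose $p$-energies remain bounded as $j\to\infty$) and passing to the limit via Theorem~\ref{converge ep} yields $w=v$ $(\triangle w)^n$-almost everywhere, and a domination argument---or a further appeal to the relative extremal function to rule out that $\{w>v\}$ carries positive capacity---upgrades this to $w=v$ on $\Omega$. I expect this third step to be the main obstacle: making the integration by parts rigorous for the possibly unbounded $v,w$ through their truncations, keeping every quantity finite by means of the weights $-h$ and Theorem~\ref{h triangle u finite}, and turning the various ``almost everywhere'' statements into an honest identity of functions. Dealing with the contact set $\{u=v\}$ in the first step is a secondary technical point handled by the same approximation methods.
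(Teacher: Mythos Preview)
Your route differs substantially from the paper's. The paper argues by direct contradiction: assuming $v(q_0)<u(q_0)$ for some $q_0$, one constructs a bounded continuous exhaustion $P$ with $(\triangle P)^n\geq\epsilon^n\,dV$ near $q_0$ (by taking the maximum of the given exhaustion $\rho$ with a small multiple of $\|q-q_0\|^2-R^2$), chooses $\eta>0$ so small that $T=\{v<u+\eta P\}$ has positive Lebesgue measure, and then applies Theorem~\ref{compare on u<v} to the pair $(v,u+\eta P)$ together with the expansion $(\triangle(u+\eta P))^n\geq(\triangle u)^n+\eta^n(\triangle P)^n$ to obtain $\int_T(\triangle u)^n<\int_T(\triangle v)^n$, contradicting the hypothesis. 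The whole argument is a few lines and uses nothing beyond Theorem~\ref{compare on u<v}.

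Your first two steps, leading to $(\triangle w)^n=(\triangle v)^n$ for $w=\max(u,v)\geq v$, can be made to work, but the last step---deducing $w=v$ from this---is where the actual content sits, and the outline you give does not close it. The telescoping and integration by parts yield $\int_\Omega(w-v)\,\triangle h\wedge(\triangle v)^k\wedge(\triangle w)^{n-1-k}=0$ for every $h\in\E_0(\Omega)$ and every $0\leq k\leq n-1$, hence $w=v$ almost everywhere with respect to each such measure. But every one of these measures carries at least one factor $\triangle v$ or $\triangle w$ and may vanish identically on large open sets (for instance wherever $v$ and $w$ are quaternionic maximal); nothing prevents $w>v$ there. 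Letting $h$ run through $\max(v,-j)$ or $\max(w,-j)$ only upgrades the conclusion to $w=v$ $(\triangle v)^n$-a.e., which is again vacuous on such sets. The ``domination argument'' you allude to is precisely the theorem being proved, and the standard device for ruling out a discrepancy set of positive Lebesgue measure---perturbing by a strictly qpsh function and invoking Theorem~\ref{compare on u<v}---is exactly the paper's direct argument applied to $(u,v)$, which makes the detour through $w$ redundant. Two secondary issues: $\{u<v\}$ is not open in general since both functions are merely upper semicontinuous, so that case should also go through Theorem~\ref{7.22 3.19} rather than through locality; and the contact-set inequality $\mathbb{1}_{\{u=v\}}(\triangle\max(u,v))^n\geq\mathbb{1}_{\{u=v\}}(\triangle v)^n$ is not established anywhere in the paper or its quaternionic references and would need its own proof.
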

	\begin{proof}
		Suppose by contradiction that there exists $q_0\in\Omega$ such that $v(q_0)<u(q_0)$. Let $\rho$ be an exhaustion function of $\Omega$ and $R>0$ such that $|q-q_0|\leq R$ for every $q\in\Omega$. We fix $\epsilon>0$ small enough such that $\rho(z_0)<-\epsilon R^2.$ We set
		$$P(q):=\max\Big(\rho(q),\epsilon\frac{\|q-q_0\|^2-R^2}{8}\Big).$$
		Then $P$ is an continuous exhaustion function of $\Omega$ and $(\triangle P)^n\geq \epsilon^n dV$ near $q_0$.
		Since $v(q_0)<u(q_0)$, we can choose $\eta>0$ small enough such that $v(q_0)<u(q_0)+\eta P(q_0)$. Then the Lebesgue measure of $T=\{q\in\Omega:\ v(q)<u(q)+\eta P(q)\}$ is strictly positive, and hence 
		\begin{equation}\label{i}
			\int\limits_T(\triangle P)^n>0.
		\end{equation}
		It follows from Theorem~\ref{compare on u<v} that
		\begin{equation}\label{ii}
			\int\limits_T(\triangle (u+\eta P))^n\leq \int\limits_T(\triangle v)^n.
		\end{equation}
		Furthermore, 
		\begin{equation}\label{iii}
			\int\limits_T(\triangle (u+\eta P))^n\geq \int\limits_T(\triangle u)^n +\eta^n \int\limits_T(\triangle P)^n.
		\end{equation}
		By inequalities (\ref{i}), (\ref{ii}) and (\ref{iii}), we conclude that
		$$\int\limits_T(\triangle u)^n< \int\limits_T(\triangle v)^n,$$
		a contradiction. The proof is complete.
	\end{proof}
	The following result is a convergence theorem for functions of the class $\E_p$ regarding their $p$-energies.
	\begin{theorem}\label{converge p-energies}
		Let $u,v\in\E_p(\Omega)$, $p>0$. Then there exist sequences $(u^j), (v^j)\subset\E_0(\Omega)$ decreasing to $u, v$ respectively such that
		$$\lim\limits_{j\rightarrow\infty}\int\limits_\Omega (-u^j)^p(\triangle v^j)^n=\int\limits_\Omega (-u)^p(\triangle v)^n.$$
		In particular, if $w\in\E_p(\Omega)$, then there exists sequence $(w^j)\in\E_0(\Omega)$ decreasing to $w$ such that
		$$\lim\limits_{j\rightarrow\infty}e_p(w^j)=e_p(w).$$
	\end{theorem}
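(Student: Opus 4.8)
The plan is to reduce the statement to one approximation lemma and then build the sequences by a diagonal argument; the analytic content is concentrated in the lemma, which I describe first. \emph{Key Lemma:} fix $\psi\in\E_0(\Omega)$ and let $(v^j)\subset\E_0(\Omega)$ be any sequence with $v^j\searrow v\in\E_p(\Omega)$ and $M:=\sup_j e_p(v^j)<\infty$; then $\int_\Omega(-\psi)^p(\triangle v^j)^n\to\int_\Omega(-\psi)^p(\triangle v)^n$ (both sides finite by Remark~\ref{energy ep finite} and Theorem~\ref{energy estimate e0}). To prove it, set $g:=(-\psi)^p$, a bounded nonnegative function that is quasi-continuous, being a continuous increasing function of the qpsh function $\psi$. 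Given $\varepsilon>0$, quasi-continuity \cite[Theorem 1.1]{WZ15} produces an open set $U$ with $\text{Cap}(U)<\varepsilon$ and a continuous $\tilde g$ on $\Omega$, $0\le\tilde g\le\|\psi\|_\infty^p$, with $\tilde g=g$ off $U$; then
$$\Big|\int_\Omega g(\triangle v^j)^n-\int_\Omega g(\triangle v)^n\Big|\le\Big|\int_\Omega\tilde g(\triangle v^j)^n-\int_\Omega\tilde g(\triangle v)^n\Big|+2\|\psi\|_\infty^p\Big(\int_U(\triangle v^j)^n+\int_U(\triangle v)^n\Big).$$
The first term tends to $0$ because $(\triangle v^j)^n\to(\triangle v)^n$ weakly (Theorem~\ref{converge ep}) and $\tilde g$ is continuous and bounded; and by Lemma~\ref{7.20 21}, $\int_U(\triangle v^j)^n\le D_p\,\text{Cap}(U)^{\frac{p}{p+n}}e_p(v^j)^{\frac{n}{p+n}}\le D_p\,\varepsilon^{\frac{p}{p+n}}M^{\frac{n}{p+n}}$, with the same bound for $\int_U(\triangle v)^n$ (lower semicontinuity of mass on the open set $U$). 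Letting $j\to\infty$ and then $\varepsilon\downarrow0$ yields the lemma. This step is the crux: the natural weight $(-\psi)^p$ is not in $\E_0(\Omega)$ — for $p\neq1$ it is not, up to sign, even qpsh — so Theorem~\ref{7.17} does not apply, and what must be excluded is a concentration of the masses $(\triangle v^j)^n$ on small-capacity sets uniformly in $j$, which is exactly what Lemma~\ref{7.20 21} provides.

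Next, given $u,v\in\E_p(\Omega)$, fix defining sequences $(\phi_k),(\psi_j)\subset\E_0(\Omega)$ with $\phi_k\searrow u$, $\psi_j\searrow v$ and bounded $p$-energies. For each fixed $k$ the Key Lemma (with $\psi=\phi_k$) gives $\int_\Omega(-\phi_k)^p(\triangle\psi_j)^n\to\int_\Omega(-\phi_k)^p(\triangle v)^n$ as $j\to\infty$, and since $(-\phi_k)^p\nearrow(-u)^p$, monotone convergence gives $\int_\Omega(-\phi_k)^p(\triangle v)^n\nearrow\int_\Omega(-u)^p(\triangle v)^n<\infty$. Choosing $j(1)<j(2)<\cdots$ with $|\int_\Omega(-\phi_k)^p(\triangle\psi_{j(k)})^n-\int_\Omega(-\phi_k)^p(\triangle v)^n|<1/k$ and setting $u^k:=\phi_k$, $v^k:=\psi_{j(k)}$, one obtains sequences in $\E_0(\Omega)$ decreasing to $u$ and $v$, with bounded $p$-energies, and $\int_\Omega(-u^k)^p(\triangle v^k)^n\to\int_\Omega(-u)^p(\triangle v)^n$. (No pair of sequences can do better, since $\liminf_k\int_\Omega(-u^k)^p(\triangle v^k)^n\ge\int_\Omega(-u)^p(\triangle v)^n$ always, by lower semicontinuity of $(-\,\cdot\,)^p$ against the weakly convergent masses.)

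For the final assertion, apply the above with $u=v=w$ to get $(u^j),(v^j)\subset\E_0(\Omega)$ decreasing to $w$ with $\int_\Omega(-u^j)^p(\triangle v^j)^n\to e_p(w)$, and set $w^j:=\max(u^j,v^j)$. Then $w^j\in\E_0(\Omega)$, $w^j\searrow w$, and $\sup_j e_p(w^j)<\infty$ by Lemma~\ref{ep e0}. Since $w^j\ge u^j$ one has $e_p(w^j)\le\int_\Omega(-u^j)^p(\triangle w^j)^n$, and since $w^j\ge v^j$ the integration by parts from the proof of Theorem~\ref{compa when u<v} gives $\int_\Omega(-u^j)^p(\triangle w^j)^n\le\int_\Omega(-u^j)^p(\triangle v^j)^n$ — immediately when $0<p\le1$ (then $-(-u^j)^p$ is qpsh and may be used as test function), and after a routine approximation when $p>1$. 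Hence $\limsup_j e_p(w^j)\le e_p(w)$, which together with the lower semicontinuity bound $\liminf_j e_p(w^j)\ge e_p(w)$ gives $e_p(w^j)\to e_p(w)$.
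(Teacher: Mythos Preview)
Your approach differs from the paper's: the paper writes $(\triangle v)^n=f(\triangle\psi)^n$ via \cite[Proposition 4.3]{W20} and constructs $(v^j)$ by solving $(\triangle v^j)^n=\min(f,j)(\triangle\psi)^n$ (\cite[Lemma 4.7]{W20}), after which the comparison principle forces $v^j\searrow v$ and everything reduces to monotone convergence; in particular $e_p(v^j)=\int(-v^j)^p\min(f,j)(\triangle\psi)^n\nearrow e_p(v)$ settles the final assertion at once, with no need for a separate $w^j$. Your quasi-continuity/diagonal route is appealing because it avoids solving auxiliary equations, but there are two gaps. In the Key Lemma you use that $\int_\Omega\tilde g\,(\triangle v^j)^n\to\int_\Omega\tilde g\,(\triangle v)^n$ for a bounded continuous $\tilde g$; however the weak convergence of Theorem~\ref{converge ep} is vague convergence, tested only against $C_c(\Omega)$, and without a uniform bound on $\int_\Omega(\triangle v^j)^n$ (which you do not have---only $\sup_j e_p(v^j)<\infty$) this step is not justified. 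It can be repaired by inserting a cutoff $\chi\in C_c^\infty(\Omega)$ as in the proof of Theorem~\ref{7.22 3.19} and controlling the boundary layer via Theorem~\ref{7.17}, but this must be carried out.

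The more serious problem is in the ``in particular'' clause. Your argument rests on the monotonicity $\int_\Omega(-u^j)^p(\triangle w^j)^n\le\int_\Omega(-u^j)^p(\triangle v^j)^n$ whenever $w^j\ge v^j$ in $\E_0(\Omega)$. For $0<p\le1$ this is fine: $t\mapsto-(-t)^p$ is convex increasing, so $-(-u^j)^p$ is qpsh and integration by parts applies. But for $p>1$ the map $t\mapsto-(-t)^p$ is concave, $-(-u^j)^p$ is \emph{not} qpsh, and the inequality is in fact false in general---it is not a matter of ``routine approximation''. Already in the one-variable analogue on the unit disc, with $u=\max(\log|z|,-M)$ for $M$ large, $v=|z|^2-1$, $w=\max(u,v)$ and $p=2$, one computes directly that $\int(-u)^2\Delta w>\int(-u)^2\Delta v$; the same radial construction transfers to the quaternionic setting. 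So this step is genuinely broken for $p>1$, and the paper's construction (which yields a \emph{single} sequence $v^j$ with $e_p(v^j)\nearrow e_p(v)$ by monotone convergence) is precisely what circumvents it.
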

	\begin{proof}
		Let $(u^j)\subset\E_0(\Omega)$ such that $u^j\searrow u$ and $\sup\limits_je_p(u^j)<\infty.$ Since $(\triangle v)^n$ does not charge on Q-polar sets, it follows from \cite[Proposition 4.3]{W20} that $$(\triangle v)^n=f(\triangle \psi)^n,$$ where $\psi\in\E_0(\Omega)$ and $0\leq f\in L^1_{\text{loc}}((\triangle\psi)^n)$. Then, by \cite[Lemma 4.7]{W20}, we can find a sequence $(v^j)\in\E_0(\Omega)$ such that $$(\triangle v^j)^n=\min(f,j)(\triangle\psi)^n.$$
		By Theorem~\ref{compare implies u<v}, $(v^j)$ is a decreasing sequence and $v^j\geq v$. Let $\varphi=\lim\limits_{j\rightarrow\infty}v^j$. Then $\varphi\geq v$ and hence $\varphi\not\equiv-\infty.$ We observe that, for every $j$,
		$$e_p(v^j)=\int\limits_\Omega (-v^j)^p\min (f,j)(\triangle\psi)^n\leq \int\limits_\Omega(-v)^p f (\triangle\psi)^n=e_p(v).$$
		Then, by Remark~\ref{energy ep finite}, we have $\sup\limits_j e_p(v^j)\leq e_p(v)<\infty,$ and hence $\varphi\in\E_p(\Omega).$ Moreover, $\varphi$ solves $(\triangle\varphi)^n=(\triangle v)^n$. Again, by Theorem~\ref{compare implies u<v}, we obtain $\varphi\equiv v$. Hence, we have
		$$\int\limits_\Omega (-u)^p(\triangle v)^n=\lim\limits_{j\rightarrow \infty}\int\limits_\Omega (-u^j)^p\min(f,j)(\triangle\psi)^n=\lim\limits_{j\rightarrow \infty}\int\limits_\Omega (-u^j)^p(\triangle v^j)^n,$$
		as desired.
	\end{proof}
	\begin{remark}\label{chosen sequence}
		The sequences $(u^j),(v^j)$ chosen in Theorem~\ref{converge p-energies} satisfy $\sup\limits_j e_p(u^j)<\infty$ and $\sup\limits_j e_p(v^j)<\infty.$
	\end{remark}
	In closing this section, we extend Theorem~\ref{energy estimate e0} from the class $\E_0$ to the class $\E_p$ as follows.
	\begin{theorem}\label{thm:energyine}
		Let $u_0,...,u_n\in\E_p(\Omega)$. Then
		$$e_p(u_0,...,u_n)\leq D_p e_p(u_0)^{\frac{p}{n+p}}e_p(u_1)^{\frac{1}{n+p}}...e_p(u_n)^{\frac{1}{n+p}},$$
		where $$D_p=\begin{cases}
			p^{\frac{-\alpha(p,n)}{1-p}},& \text{ if }0<p<1,\\
			1,& \text{ if }p=1,\\
			p^{\frac{p\alpha(p,n)}{p-1}},& \text{ if }p>1,
		\end{cases}$$
		and $$\alpha(p,n)=(p+2)\Big( \frac{p+1}{p}\Big)^{n-1}-(p+1).$$
	\end{theorem}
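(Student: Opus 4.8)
The plan is to deduce Theorem~\ref{thm:energyine} from its $\E_0$ counterpart, Theorem~\ref{energy estimate e0}, by an approximation argument based on the special sequences furnished by Theorem~\ref{converge p-energies}.

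First I would fix, for each $k\in\{0,1,\dots,n\}$, a sequence $(u_k^j)_{j}\subset\E_0(\Omega)$ with $u_k^j\searrow u_k$ and $\lim\limits_{j\to\infty}e_p(u_k^j)=e_p(u_k)$, which exists by the last assertion of Theorem~\ref{converge p-energies}; since $e_p(u_k)<\infty$ by Remark~\ref{energy ep finite}, this automatically gives $\sup\limits_{j}e_p(u_k^j)<\infty$ for every $k$. Applying Theorem~\ref{energy estimate e0} to $u_0^j,\dots,u_n^j\in\E_0(\Omega)$ then yields, for every $j$,
$$e_p(u_0^j,\dots,u_n^j)\leq D_p\,e_p(u_0^j)^{\frac{p}{n+p}}e_p(u_1^j)^{\frac{1}{n+p}}\cdots e_p(u_n^j)^{\frac{1}{n+p}},$$
and by the choice of the sequences the right-hand side tends to $D_p\,e_p(u_0)^{\frac{p}{n+p}}e_p(u_1)^{\frac{1}{n+p}}\cdots e_p(u_n)^{\frac{1}{n+p}}$ as $j\to\infty$. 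Thus it suffices to prove the semicontinuity bound
$$e_p(u_0,\dots,u_n)\leq\liminf\limits_{j\to\infty}e_p(u_0^j,\dots,u_n^j).$$

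To establish this, I would first note that $\sup\limits_{j}e_p(u_k^j)<\infty$ for each $k$ puts us in the setting of Theorem~\ref{converge ep}, so $\triangle u_1^j\wedge\cdots\wedge\triangle u_n^j\to\triangle u_1\wedge\cdots\wedge\triangle u_n$ weakly. Fixing $m\in\N$, for every $j\geq m$ one has $u_0^j\leq u_0^m$, hence $(-u_0^j)^p\geq(-u_0^m)^p$ and
$$e_p(u_0^j,\dots,u_n^j)=\int\limits_\Omega(-u_0^j)^p\,\triangle u_1^j\wedge\cdots\wedge\triangle u_n^j\geq\int\limits_\Omega(-u_0^m)^p\,\triangle u_1^j\wedge\cdots\wedge\triangle u_n^j.$$
Since $u_0^m\in\E_0(\Omega)$, the function $(-u_0^m)^p$ is nonnegative, lower semicontinuous, bounded, and all its superlevel sets $\{(-u_0^m)^p\geq\delta\}=\{u_0^m\leq-\delta^{1/p}\}$ are relatively compact in $\Omega$; it is therefore the pointwise supremum of an increasing sequence of nonnegative functions in $C_0(\Omega)$, so by the usual lower semicontinuity of $\nu\mapsto\int g\,d\nu$ for nonnegative lower semicontinuous $g$ under weak convergence of positive Radon measures,
$$\liminf\limits_{j\to\infty}\int\limits_\Omega(-u_0^m)^p\,\triangle u_1^j\wedge\cdots\wedge\triangle u_n^j\geq\int\limits_\Omega(-u_0^m)^p\,\triangle u_1\wedge\cdots\wedge\triangle u_n.$$
Consequently $\liminf\limits_{j\to\infty}e_p(u_0^j,\dots,u_n^j)\geq\int\limits_\Omega(-u_0^m)^p\,\triangle u_1\wedge\cdots\wedge\triangle u_n$ for every $m$, and letting $m\to\infty$ with $(-u_0^m)^p\nearrow(-u_0)^p$, monotone convergence gives exactly $\liminf\limits_{j\to\infty}e_p(u_0^j,\dots,u_n^j)\geq e_p(u_0,\dots,u_n)$, as needed.

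Combining the three displays proves the inequality; it also shows a posteriori that $e_p(u_0,\dots,u_n)<\infty$, since its right-hand side is finite by Remark~\ref{energy ep finite}. The step I expect to be the main obstacle is the semicontinuity bound: one cannot simply apply the convergence theorem Theorem~\ref{7.17}, because the weight $(-u_0^j)^p$ changes with $j$ and $-(-u_0^j)^p$ does not in general lie in $\E_0(\Omega)$ (it need not even be qpsh when $p>1$). The device of freezing the weight at level $m$, passing to the weak limit of the mixed Monge--Amp\`ere masses through Theorem~\ref{converge ep}, and only afterwards letting $m\to\infty$ by monotone convergence is what resolves this; the only other mild point is that the approximating sequences must be produced separately for each index $k$, which causes no difficulty.
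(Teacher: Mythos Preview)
Your proof is correct and follows essentially the same route as the paper: choose approximating sequences via Theorem~\ref{converge p-energies} with $e_p(u_k^j)\to e_p(u_k)$, invoke the $\E_0$ estimate of Theorem~\ref{energy estimate e0}, and pass to the limit by freezing the weight $(-u_0^m)^p$, using lower semicontinuity against the weak convergence of Theorem~\ref{converge ep}, and then sending $m\to\infty$ by monotone convergence. The paper organizes the same steps slightly differently (it applies Theorem~\ref{energy estimate e0} to the mixed family $u_0^{j_0},u_1^j,\dots,u_n^j$ before taking limits, and handles the lower semicontinuity via a cutoff $\chi\in C_0^\infty(\Omega)$ with $\chi\nearrow 1$), but the substance is identical.
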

	\begin{proof}
		By Theorem~\ref{converge p-energies}, we can find $(u_0^j),...,(u_n^j)\subset\E_0(\Omega)$ such that $u_k^j\searrow u_k$ and $e_p(u_k^j)\rightarrow e_p(u_k)$ for all $k=0,...,n$. 
		
		Fix $j_0$, we first claim that
		\begin{equation}\label{liminf}
			\int\limits_\Omega (-u_0^{j_0})^p\triangle u_1\wedge...\wedge u_n\leq \liminf\limits_{j\rightarrow\infty}\int\limits_\Omega (-u_0^{j_0})^p\triangle u^j_1\wedge...\wedge u^j_n.
		\end{equation}
		Indeed, let $\chi\in C_0^\infty(\Omega),$ since $(-u_0^{j_0})^p$ is lower semicontinuous and so is $(-u_0^{j_0})^p\chi$. Then, by Remark~\ref{chosen sequence} and Theorem~\ref{converge ep}, it follows that
		$$\int\limits_\Omega (-u_0^{j_0})^p\chi \triangle u_1\wedge...\wedge \triangle u_n\leq\liminf\limits_{j\rightarrow\infty} \int\limits_\Omega(-u_0^{j_0})^p\chi \triangle u^j_1\wedge...\wedge \triangle u^j_n.$$
		The claim thus follows by letting $\chi\rightarrow1.$
		
		For every $j_0$, by inequality (\ref{liminf}) and Theorem~\ref{energy estimate e0}, we have
		\begin{align*}
			\int\limits_\Omega (-u_0^{j_0})^p\triangle u_1\wedge...\wedge u_n&\leq D_p\liminf\limits_{j\rightarrow\infty}e_p(u_0^{j_0})^\frac{p}{p+n}e_p(u^j_1)^\frac{1}{p+n}...e_p(u^j_n)^\frac{1}{p+n}\\
			&=D_pe_p(u_0^{j_0})^\frac{p}{p+n}e_p(u_1)^\frac{1}{p+n}...e_p(u_n)^\frac{1}{p+n}.
		\end{align*}
		The proof is thus complete by letting $j_0\rightarrow\infty.$
	\end{proof}
	
	\section{On the constant in the quaternionic energy estimate}\label{sec4}
	We first recall some results concerning to beta, gamma and digamma functions.
	For $x,y >0$, the Beta function $B(x,y)$ is defined by
	$$B(x,y)=\int_0^1 t^{x-1}(1-t)^{y-1}dt.$$
	A property of the Beta function relating to the Gamma function as follows:
	$$B(x,y)=\frac{\Gamma (x)\Gamma(y)}{\Gamma (x+y)},$$
	where the Gamma function is given by
	$$\Gamma(x)=\int_0^{+\infty}e^{-t}t^{x-1}dt.$$
	This shows us that the Beta function is symmetric. Its partial derivative is computed as follows
	\begin{align}
		\label{eq:derivative}
		\frac{\partial }{\partial y}B(x,y)= &B(x,y)\left(\frac{\Gamma^{'}(y)}{\Gamma (y)}-\frac{\Gamma^{'}(x+y)}{\Gamma (x+y)}\right)\notag\\
		=&B(x,y)(\psi(y)-\psi(x+y)),
	\end{align}
	where $\psi(y)=\frac{\Gamma^{'}(y)}{\Gamma (y)}$ is the digamma function.
	\begin{lemma}
		\label{lem}
		Let $f \colon \bN \times (0,+\infty)\to \R$ be the function defined by
		\begin{equation}
			f(p,n)=\frac{1}{n}+\frac{p}{n+p}+\psi (n)-\psi (n+p+1).
		\end{equation}
		Then we have $f(p,n)\neq 0$ for all $n\in \bN$ and $p>0$ $(p\neq 1)$.
	\end{lemma}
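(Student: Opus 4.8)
The plan is to analyze the function $f(p,n) = \frac{1}{n} + \frac{p}{n+p} + \psi(n) - \psi(n+p+1)$ by exploiting the recurrence relation for the digamma function, namely $\psi(x+1) - \psi(x) = \frac{1}{x}$, which telescopes to $\psi(n+p+1) - \psi(n) = \sum_{k=0}^{?}$ only for integer shifts; since $p+1$ is generally not an integer, I instead use the integral representation $\psi(n+p+1) - \psi(n) = \int_0^1 \frac{t^{n-1} - t^{n+p}}{1-t}\,dt$. Substituting this, we get
\begin{equation*}
	f(p,n) = \frac{1}{n} + \frac{p}{n+p} - \int_0^1 \frac{t^{n-1} - t^{n+p}}{1-t}\,dt.
\end{equation*}
Writing $\frac{1}{n} = \int_0^1 t^{n-1}\,dt$ and $\frac{p}{n+p} = 1 - \frac{n}{n+p} = \int_0^1 (1 - t^{n+p-1})\cdot$... more cleanly, $\frac{1}{n+p} = \int_0^1 t^{n+p-1}\,dt$, so $\frac{p}{n+p} = p\int_0^1 t^{n+p-1}\,dt$. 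The goal is to combine everything under a single integral $\int_0^1 g_{p,n}(t)\,dt$ and show the integrand has a definite sign (or that the integral is nonzero) for $p \neq 1$.

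First I would rewrite $f(p,n)$ as a single integral over $[0,1]$. Using $\frac{t^{n-1}-t^{n+p}}{1-t} = t^{n-1}\cdot\frac{1-t^{p+1}}{1-t} = t^{n-1}(1 + t + \cdots)$ — but again the exponent $p+1$ is non-integral, so the cleaner route is: $\int_0^1 \frac{t^{n-1}-t^{n+p}}{1-t}\,dt$. Set $t^{n-1} - t^{n+p} = (t^{n-1} - t^n) + (t^n - t^{n+p})$ if that helps, or just keep it. After assembling, I expect
\begin{equation*}
	f(p,n) = \int_0^1 \left( t^{n-1} + p\,t^{n+p-1} - \frac{t^{n-1}-t^{n+p}}{1-t} \right) dt = \int_0^1 t^{n-1}\left( 1 + p\,t^{p} - \frac{1 - t^{p+1}}{1-t} \right) dt.
\end{equation*}
Now set $\phi(t) = 1 + p\,t^p - \frac{1-t^{p+1}}{1-t}$ on $(0,1)$. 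The key claim is that $\phi(t)$ has a constant sign on $(0,1)$ for each fixed $p>0$ with $p\neq 1$, and vanishes identically when $p=1$ (indeed at $p=1$, $\frac{1-t^2}{1-t} = 1+t$ and $1 + t - (1+t) = 0$, consistent with $D_1 = 1$). To prove the sign claim I would multiply through by $(1-t) > 0$: the sign of $\phi(t)$ equals the sign of $\Psi(t) := (1-t)(1 + p t^p) - (1 - t^{p+1}) = -t + p t^p - p t^{p+1} + t^{p+1} = p t^p - (p-1)t^{p+1} - t$. Factor out $t$: $\Psi(t) = t\big( p t^{p-1} - (p-1)t^p - 1 \big)$, so I study $h(t) = p t^{p-1} - (p-1)t^p - 1$ on $(0,1)$. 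Note $h(1) = p - (p-1) - 1 = 0$, and $h'(t) = p(p-1)t^{p-2} - p(p-1)t^{p-1} = p(p-1)t^{p-2}(1-t)$. For $t\in(0,1)$ this has the sign of $p(p-1)$: positive if $p>1$, negative if $0<p<1$. Hence $h$ is strictly monotone on $(0,1)$ with $h(1)=0$, so $h(t) < 0$ on $(0,1)$ when $p>1$ and $h(t) > 0$ on $(0,1)$ when $0<p<1$; either way $h$, and therefore $\Psi$ and $\phi$, is of one strict sign throughout $(0,1)$.

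Finally, since $t^{n-1} > 0$ on $(0,1)$, the integrand $t^{n-1}\phi(t)$ is of that same constant sign and is not identically zero, so $f(p,n) = \int_0^1 t^{n-1}\phi(t)\,dt \neq 0$ for every $n\in\bN$ and every $p>0$ with $p\neq 1$. This completes the argument. The main obstacle — and the step requiring the most care — is the initial reduction to the single integral $\int_0^1 t^{n-1}\phi(t)\,dt$, in particular justifying the integral representation $\psi(n+p+1)-\psi(n) = \int_0^1 \frac{t^{n-1}-t^{n+p}}{1-t}\,dt$ and checking the bookkeeping of exponents; once $\phi$ is in hand, the factorization $\Psi(t) = t\,h(t)$ and the one-line derivative computation $h'(t) = p(p-1)t^{p-2}(1-t)$ make the sign analysis routine. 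One should also note the boundary behavior of $\phi$ near $t=0$ (where $\phi(t) \to 1 - 1 = 0$ if $p>1$, and $\phi(t)\to +\infty$ if $0<p<1$ via the $p t^p$... actually $p t^p \to 0$, so $\phi(0^+) = 1 + 0 - 1 = 0$) is harmless since integrability is clear from $t^{n-1}\phi(t)$ being bounded near the endpoints when $p\ge 1$ and having only an integrable singularity otherwise; but strictly this is not even needed because the sign argument already shows the integral of a one-signed, not-identically-zero function is nonzero.
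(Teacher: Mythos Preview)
Your argument is correct. The key steps---writing $\psi(n+p+1)-\psi(n)$ via the standard integral formula $\int_0^1 \frac{t^{n-1}-t^{n+p}}{1-t}\,dt$, combining with $\frac{1}{n}=\int_0^1 t^{n-1}\,dt$ and $\frac{p}{n+p}=p\int_0^1 t^{n+p-1}\,dt$ to obtain $f(p,n)=\int_0^1 t^{n-1}\phi(t)\,dt$, and then reducing the sign of $\phi(t)$ to that of $h(t)=pt^{p-1}-(p-1)t^p-1$ via the factorization $\Psi(t)=t\,h(t)$---all check out. The derivative computation $h'(t)=p(p-1)t^{p-2}(1-t)$ together with $h(1)=0$ indeed shows $h$ is strictly one-signed on $(0,1)$ whenever $p\neq 1$, and hence so is $\phi$; since $t^{n-1}\phi(t)$ is integrable (both $\phi(0^+)$ and $\phi(1^-)$ equal $0$), the integral is finite and nonzero. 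Incidentally, your argument works for every real $n>0$, not just integers.

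By contrast, the paper does not prove this lemma at all: it simply invokes \cite[Lemma~2.2]{AC09}. So your route is genuinely different in the sense that you supply a complete, self-contained proof where the paper defers to an external reference. What your approach buys is transparency: the sign of $f(p,n)$ is traced back to the elementary monotonicity of $h$, and one sees immediately why $p=1$ is the unique exceptional value (since $h'\equiv 0$ there and $h\equiv 0$). The cost is only the short calculation you carried out; the exploratory remarks and false starts in your write-up should be trimmed, but the mathematics needs no repair.
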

	{\it Proof.}
	This follows from \cite[Lemma 2.2]{AC09}. \hfill$\square$
	\vspace{3mm}
	
	We now ready to construct an example to show that the optimal constant on the right-hand side in inequality~(\ref{optimal}) must exceed $1$ in the class $\E_p$ when $p\neq1$.
	\begin{example}
		For $a>0$, we consider a family of quaternionic plurisubharmonic functions in the unit ball $\mathbb{B}\subset\mathbb{H}^{n}$:
		\begin{align*}
			u_a(q)=\norm{q}^{2a}-1.
		\end{align*}
		We can compute
		\begin{align*}
			\left(\Delta u_a\right)^n =C_0 a^{n}(a+1)\norm{q}^{2n(a-1)}dV,
		\end{align*}
		where $C_0$ is the constant depending only on $n$ and $dV$ is the Lebesgue measure on $\mathbb{H}^{n}$.
		For $a,b>0$, we have
		\begin{align}\label{energy1}
			&\int_{\mathbb{B}} (-u_a)^p \left(\Delta u_b\right)^n = C_0 b^{n}(b+1) \int_{\mathbb{B}} \left(1-\norm{q}^{2a}\right)^p \norm{q}^{2n(b-1)}dV \notag\\
			&= C_0 b^{n}(b+1) \mbox{Area}(\partial B) \int_0^1 \left(1-t^{2a}\right)^p t^{2n(b-1)} t^{4n-1}dt \notag\\
			&= C \frac{b^{n}(b+1)}{a}\int_0^1 (1-s)^p s^{\frac{(b+1)n}{a}-1}ds \ (\text{by setting } s=t^{2a})\notag\\
			&= C \frac{b^{n}(b+1)}{a} B\left(p+1, \frac{(b+1)n}{a}\right),
		\end{align}
		where $C$ depends only on $n$. Letting $a=b$ in (\ref{energy1}), the quaternionic $p$-energy of $u_a$ becomes
		\begin{align*}
			e_p (u_a) &=\int_{\mathbb{B}}(-u_a)^p \left(\Delta u_a\right)^n\\
			&=C a^{n-1}(a+1) B\left(p+1,\frac{(a+1)n}{a}\right).
		\end{align*}
		
		Assume that the optimal constant on the right-hand side in inequality~(\ref{optimal}) is $1$. Then the quaternionic $p$-energy estimate inequality becomes
		\begin{align*}
			\frac{b^{n}(b+1)}{a}B\left(p+1, \frac{(b+1)n}{a}\right)\leq& \left(a^{n-1}(a+1)\right)^{\frac{p}{n+p}}B\left(p+1, \frac{(a+1)n}{a}\right)^{\frac{p}{n+p}}\\
			& \times \left(b^{n-1}(b+1)\right)^{\frac{n}{n+p}}B\left(p+1, \frac{(b+1)n}{b}\right)^{\frac{n}{n+p}}
		\end{align*}
		which is equivalent to the following inequality
		\begin{align*}
			&\left(\frac{b}{a}\right)^{\frac{np+n}{n+p}}\left(\frac{b+1}{a+1}\right)^{\frac{p}{n+p}}B\left(p+1, \frac{(b+1)n}{a}\right)\\
			\leq& B\left(p+1,\frac{(a+1)n}{a}\right)^{\frac{p}{n+p}}B\left(p+1, \frac{(b+1)n}{b}\right)^{\frac{n}{n+p}}.
		\end{align*}
		We define the function $F: (0,\infty)\times (0,\infty)\to \mathbb{R}$ by
		\begin{align*}
			F(a,b)=&\left(\frac{b}{a}\right)^{\frac{np+n}{n+p}}\left(\frac{b+1}{a+1}\right)^{\frac{p}{n+p}}B\left(p+1, \frac{(b+1)n}{a}\right)\\
			&- B\left(p+1,\frac{(a+1)n}{a}\right)^{\frac{p}{n+p}}B\left(p+1, \frac{(b+1)n}{b}\right)^{\frac{n}{n+p}}.
		\end{align*}
		It is easy to see that  $F(a,b)$ is continuously differentiable on $(0,\infty)\times (0,\infty)$. Moreover, since $F(1,1)=0$, it will be finished provided that $\frac{\partial }{\partial b} F(1,1)\neq 0$. Indeed, by formula (\ref{eq:derivative}), we can compute
		\begin{align*}
			&\frac{\partial}{\partial b} B\left(p+1,\frac{(b+1)n}{a}\right)\\
			=&\frac{n}{a}B\left(p+1,\frac{(b+1)n}{a}\right)\left[\psi \left(\frac{(b+1)n}{a}\right)-\psi \left(p+1+\frac{(b+1)n}{a}\right)\right]
		\end{align*}
		and
		\begin{align*}
			&\frac{\partial}{\partial b} B\left(p+1,\frac{(b+1)n}{b}\right)\\
			=&-\frac{n}{b^2}B\left(p+1,\frac{(b+1)n}{b}\right)\left[\psi \left(\frac{(b+1)n}{b}\right)-\psi \left(p+1+\frac{(b+1)n}{b}\right)\right].
		\end{align*}
		Hence
		\begin{align*}
			\frac{\partial F}{\partial b}=& \left(\frac{np+n}{n+p}\right)a^{-\frac{np+n}{n+p}}b^{\frac{np+n}{n+p}-1}B\left(p+1, \frac{(b+1)n}{a}\right)\\
			&+\frac{p}{n+p} \left(\frac{b}{a}\right)^{\frac{np+n}{n+p}}\left(a+1\right)^{-\frac{p}{n+p}}\left(b+1\right)^{\frac{p}{n+p}-1}B\left(p+1, \frac{(b+1)n}{a}\right)\\
			&+ \left(\frac{b}{a}\right)^{\frac{np+n}{n+p}}\left(\frac{b+1}{a+1}\right)^{\frac{p}{n+p}}\frac{n}{a}B\left(p+1,\frac{(b+1)n}{a}\right)\times\\
			& \quad \quad \times \left[\psi \left(\frac{(b+1)n}{a}\right)-\psi \left(p+1+\frac{(b+1)n}{a}\right)\right]\\
			&+\frac{n^2}{b^2(n+p)}B\left(p+1, \frac{(a+1)n}{a}\right)^{\frac{p}{n+p}}B\left(p+1,\frac{(b+1)n}{b}\right)^{\frac{n}{n+p}}\times\\
			& \quad \quad \times \left[\psi \left(\frac{(b+1)n}{b}\right)-\psi \left(p+1+\frac{(b+1)n}{b}\right)\right].
		\end{align*}
		This implies that $\frac{\partial F}{\partial b}(1,1)$ is equal to
		\begin{align*}
			&B\left(p+1,2n\right)\left[\frac{np+n}{n+p}+\frac{p}{2(n+p)}+\left(n+\frac{n^2}{n+p}\right)\left(\psi(2n)-\psi(2n+p+1)\right)\right]\\
			=&\frac{(2n^2+np)B(p+1,2n)}{n+p}\left[\frac{2np+2n+p}{2(2n^2+np)}+\psi (2n)-\psi(2n+p+1)\right]\\
			=&\frac{(2n^2+np)B(p+1,2n)}{n+p}\left[\frac{1}{2n}+\frac{p}{2n+p}+\psi (2n)-\psi(2n+p+1)\right].
		\end{align*}
		Hence, by Lemma \ref{lem}, we obtain that $\frac{\partial F}{\partial b}(1,1)\neq 0$ for any $p>0, p\neq 1$.   
	\end{example}
	
	\section*{Acknowledgments}
	The first named author is supported by the MOE grant MOE-T2EP20120-0010. The authors would like to thank Professor Pham Hoang Hiep and Associate Professor Do Hoang Son for very useful discussions.
	
\end{document}